\icmltitlerunning{Alternating Randomized Block Coordinate Descent}
\newcommand{\vx}{\mathbf{x}}
\newcommand{\vxh}{\mathbf{\hat{x}}}
\newcommand{\vy}{\mathbf{y}}
\newcommand{\vz}{\mathbf{z}}
\newcommand{\vv}{\mathbf{v}}
\newcommand{\vu}{\mathbf{u}}
\newcommand{\vw}{\mathbf{w}}
\newcommand{\vb}{\mathbf{b}}
\newcommand{\zeros}{\mathbf{0}}
\newcommand{\mM}{\mathbf{M}}
\newcommand{\mB}{\mathbf{B}}
\newcommand{\mC}{\mathbf{C}}
\newcommand{\vr}{\mathbf{r}}
\newcommand{\innp}[1]{\left\langle #1 \right\rangle}
\newcommand*{\vsepfbox}[1]{%
  \begingroup
    \sbox0{\fbox{#1}}%
    \setlength{\fboxrule}{0pt}%
    \mbox{\kern-\fboxsep\fbox{\unhbox0}\kern-\fboxsep}%
  \endgroup
}
\newcommand{\littlesum}{\mathop{\textstyle\sum}}
\newcommand{\defeq}{\stackrel{\mathrm{\scriptscriptstyle def}}{=}}
\theoremstyle{plain} \numberwithin{equation}{section}
\newtheorem{theorem}{Theorem}[section]
\numberwithin{theorem}{section}
\newtheorem{lemma}[theorem]{Lemma}
\newtheorem{proposition}[theorem]{Proposition}
\newtheorem{observation}[theorem]{Observation}
\theoremstyle{definition}
\newtheorem{remark}[theorem]{Remark}
\DeclareMathOperator*{\argmin}{argmin}
\begin{document}

\twocolumn[
\icmltitle{Alternating Randomized Block Coordinate Descent}



\icmlsetsymbol{equal}{*}

\begin{icmlauthorlist}
\icmlauthor{Jelena Diakonikolas}{bu}
\icmlauthor{Lorenzo Orecchia}{bu}
\end{icmlauthorlist}

\icmlaffiliation{bu}{Department of Computer Science, Boston University, Boston, MA, USA}

\icmlcorrespondingauthor{Jelena Diakonikolas}{jelenad@bu.edu}
\icmlcorrespondingauthor{Lorenzo Orecchia}{orecchia@bu.edu}

\icmlkeywords{Alternating minimization, block coordinate descent, optimization}

\vskip 0.3in
]



\printAffiliationsAndNotice{} 
\begin{abstract}
Block-coordinate descent algorithms and alternating minimization methods are fundamental optimization algorithms and an important primitive in large-scale optimization and machine learning. While various block-coordinate-descent-type methods have been studied extensively, only alternating minimization -- which applies to the setting of only two blocks -- is known to have convergence time that scales independently of the least smooth block. A natural question is then: is the setting of two blocks special? 

We show that the answer is ``no'' as long as the least smooth block can be optimized exactly -- an assumption that is also needed in the setting of alternating minimization. We do so by introducing a novel algorithm~\ref{eq:AR-BCD}, whose convergence time scales independently of the least smooth (possibly non-smooth) block. The basic algorithm generalizes both alternating minimization and randomized block coordinate (gradient) descent, and we also provide its accelerated version -- \ref{eq:AAR-BCD}. As a special case of~\ref{eq:AAR-BCD}, we obtain the first nontrivial accelerated alternating minimization algorithm.

\end{abstract}

\section{Introduction}\label{sec:intro}

First-order methods for minimizing smooth convex functions are a cornerstone of large-scale optimization and machine learning. Given the size and heterogeneity of the data in these applications, there is a particular interest in designing iterative methods that, at each iteration, only optimize over a subset of the decision variables~\cite{wright2015coordinate}.

This paper focuses on two classes of methods that constitute important instantiations of this idea. The first class is that of {\it block-coordinate descent methods}, i.e., methods that partition the set of variables into $n \geq 2$ blocks and perform a gradient descent step on a single block at every iteration, while leaving the remaining variable blocks fixed. A paradigmatic example of this approach is the randomized Kaczmarz algorithm of~\cite{SV} for linear systems and its generalization~\cite{nesterov2012efficiency}. 
The second class is that of {\it alternating minimization methods}, i.e., algorithms that partition the variable set into only $n=2$ blocks and alternate between {\it exactly optimizing} one block or the other at each iteration (see, e.g.,~\cite{beck2015convergence-AM} and references therein). 

Besides the computational advantage in only having to update a subset of variables at each iteration, methods in these two classes are also able to exploit better the structure of the problem, which, for instance, may be computationally expensive only in a small number of variables. 
To formalize this statement, assume that the set of variables is partitioned into $n \leq N$ mutually disjoint blocks, where the $i^{\mathrm{th}}$ block of variable $\vx$ is denoted by $\vx^{i}$, and the gradient corresponding to the $i^{\mathrm{th}}$ block is denoted by $\nabla_i f(\vx)$. Each block $i$ will be associated with a smoothness parameter $L_i,$ I.e., $\forall \vx, \vy \in \mathbb{R}^N$: 
\begin{equation}\label{eq:smoothness-grad}
\|\nabla_i f(\vx + I_N^i\vy) - \nabla_i f(\vx)\|_* \leq L_i \|\vy^i\|,
\end{equation}
where $I_N^i$ is a diagonal matrix whose diagonal entries equal one for coordinates from block $i$, and are zero otherwise. 

In this setting, the convergence time of standard randomized block-coordinate descent methods, such as those in~\cite{nesterov2012efficiency}, scales as $O\left(\frac{\sum_i L_i}{\epsilon}\right)$, where $\epsilon$ is the desired additive error. By contrast, when $n=2,$ the convergence time of the alternating minimization method~\cite{beck2015convergence-AM} scales as $O\left(\frac{L_{\min}}{\epsilon}\right),$ where $L_{\min}$ is the minimum smoothness parameter of the two blocks. 
This means that one of the two blocks can have arbitrarily poor smoothness (including $\infty$), as long it is easy to optimize over it. Some important examples with a nonsmooth block (with smoothness parameter equal to infinity) can be found in~\cite{beck2015convergence-AM}. Additional examples of problems for which exact optimization over the least smooth block can be performed efficiently are provided in Appendix~\ref{app:efficient-iterations}.

In this paper, we address the following open question, which was implicitly raised by~\cite{beck2013convergence-BCD}: can we design algorithms that combine the features of randomized block-coordinate descent and alternating minimization? In particular, assuming we can perform exact optimization on block $n$, can we construct a block-coordinate descent algorithm whose running time scales with $O(\sum_{i=1}^{n-1} L_i),$ i.e., independently of the smoothness $L_n$ of the $n^{\mathrm{th}}$ block?  This would generalize both existing block-coordinate descent methods, by allowing one block to be optimized exactly, and existing alternating minimization methods, by allowing $n$ to be larger than $2$ and requiring exact optimization only on a single block.

We answer these questions in the affirmative by presenting a novel algorithm: alternating randomized block coordinate descent (\ref{eq:AR-BCD}). The algorithm alternates between an exact optimization over a fixed, possibly non-smooth block, and a gradient descent or exact optimization over a randomly selected block among the remaining blocks. 
For two blocks, the method reduces to the standard alternating minimization, while when the non-smooth block is empty (not optimized over), we get randomized block coordinate descent (RCDM) from~\cite{nesterov2012efficiency}.

Our second contribution is \ref{eq:AAR-BCD}, an accelerated version of \ref{eq:AR-BCD}, which achieves the accelerated rate of $\frac{1}{k^2}$ without incurring any dependence on the smoothness of block $n$.  Furthermore,  when the non-smooth block is empty, \ref{eq:AAR-BCD} recovers the fastest known convergence bounds for block-coordinate descent~\cite{qu2016coordinate,allen2016even, nesterov2012efficiency, lin2014accelerated,nesterov-Stich2017efficiency}. As a special case,~\ref{eq:AAR-BCD} provides the first accelerated alternating minimization algorithm, obtained directly from~\ref{eq:AAR-BCD} when the number of blocks equals two.\footnote{The remarks about \emph{accelerated} alternating minimization have been added in the second version of the paper, in July 2019, partly to clarify the relationship to methods obtained in~\cite{guminov2019accelerated}, which was posted to the arXiv for the first time in June 2019. At a technical level, nothing new is introduced compared to the first version of the paper -- everything stated in Section~\ref{sec:acc-alt-min} follows either as a special case or a simple corollary of the results that appeared in the first version of the paper in May 2018.} 
Another conceptual contribution is our extension of the approximate duality gap technique of~\cite{thegaptechnique}, which leads to a general and more streamlined analysis. 

Finally, to illustrate the results, we perform a preliminary experimental evaluation of our methods against existing block-coordinate algorithms and discuss how their performance depends on the smoothness and size of the blocks.

\paragraph{Related Work} 
Alternating minimization and cyclic block coordinate descent are old and fundamental algorithms~\cite{ortega1970iterative} whose convergence (to a stationary point) has been studied even in the non-convex setting, in which they were shown to converge asymptotically under the additional assumptions that the blocks are optimized exactly and their minimizers are unique~\cite{bertsekas1999nonlinear}. However, even in the non-smooth convex case, methods that perform exact minimization over a fixed set of blocks may converge arbitrarily slowly. This has lead scholars to focus on the case of smooth convex minimization, for which nonasymptotic convergence rates were obtained recently in~\cite{beck2013convergence-BCD,beck2015convergence-AM,sun2015improved,saha2013nonasymptotic}. However, prior to our work, convergence bounds that are independent of the largest smoothness parameter were only known for the setting of two blocks.

Randomized coordinate descent methods, in which steps over coordinate blocks are taken in a non-cyclic randomized order (i.e., in each iteration one block is sampled with replacement) were originally analyzed in~\cite{nesterov2012efficiency}. The same paper~\cite{nesterov2012efficiency} also provided an accelerated version of these methods. The results of~\cite{nesterov2012efficiency} were subsequently improved and generalized to various other settings (such as, e.g., composite minimization) in \cite{lee2013efficient,allen2016even,nesterov-Stich2017efficiency,richtarik2014iteration,fercoq2015accelerated,lin2014accelerated}. The analysis of the different block coordinate descent methods under various sampling probabilities (that, unlike in our setting, are non-zero over all the blocks) was unified in \cite{qu2016coordinate} and extended to a more general class of steps within each block in \cite{ GowerR15, qu2016sdna}. 
 
Our results should be carefully compared to a number of proximal block-coordinate methods that rely on different assumptions~\cite{tseng2009coordinate,richtarik2014iteration,lin2014accelerated,fercoq2015accelerated}. In this setting, the function $f$ is assumed to have the structure $f_0(\vx)+ \Psi(\vx),$ where $f_0$ is smooth, the non-smooth convex function $\Psi$ is separable over the blocks, i.e., $\Psi(\vx) = \sum_{i=1}^n \Psi_i(\vx_i)$, and we can efficiently compute the proximal operator of each $\Psi_i$. This strong assumption allows these methods to make use of the standard proximal optimization framework.
By contrast, in our paper, the convex objective can be taken to have an arbitrary form, where the non-smoothness of a block need not be separable, though the function is assumed to be differentiable. 

\section{Preliminaries}\label{sec:prelims}
We assume that we are given oracle access to the gradients of a continuously differentiable convex function $f: \mathbb{R}^N \rightarrow \mathbb{R}$, where computing gradients over only a subset of coordinates is computationally much cheaper than computing the full gradient. We are interested in minimizing $f(\cdot)$ over $\mathbb{R}^N$, and we denote $\vx_* = \argmin_{\vx \in \mathbb{R}^N}f(\vx)$. We let $\|\cdot\|$ denote an arbitrary (but fixed) norm, and $\|\cdot\|_*$ denote its dual norm, defined in the standard way: $\|\vz\|_* = \sup_{\vx \in \mathbb{R}^N: \|\vx\|=1}\innp{\vz, \vx}$.\footnote{Note that the analysis extends in a straightforward way to the case where each block is associated with a different norm (see, e.g.,~\cite{nesterov2012efficiency}); for simplicity of presentation, we take the same norm over all blocks.} 

 Let $I_N$ be the identity matrix of size $N$, $I_N^{i}$ be a diagonal matrix whose diagonal elements $j$ are equal to one if variable $j$ is in the $i^{\mathrm{th}}$ block, and zero otherwise. Notice that $I_N = \littlesum_{i=1}^n I_N^i$.  Let $S_{i}(\vx) = \{\vy \in \mathbb{R}^N : (I_N - I_N^i)\vy = (I_N - I_N^i)\vx\}$, that is, $S_i$ contains all the points from $\mathbb{R}^N$ whose coordinates differ from those of $\vx$ only over block $i$.

We denote the smoothness parameter of block $i$ by $L_i$, as defined in Equation~\eqref{eq:smoothness-grad}. Equivalently,  
$\forall \vx, \vy \in \mathbb{R}^N$:
\begin{equation}\label{eq:smoothness-2nd-order}
f(\vx + I_N^i \vy) \leq f(\vx) + \innp{\nabla_i f(\vx), \vy^i} + \frac{L_i}{2}\|\vy^i\|^2.
\end{equation}

The gradient step over block $i$ is then defined as:
\begin{equation}\label{eq:grad-step-i}
\begin{aligned}
&T_i(\vx)\\
&\hspace{.2cm}= \argmin_{\vy \in {S}_i(\vx)} \Big\{ \innp{\nabla f(\vx), \vy - \vx} + \frac{L_i}{2}\|\vy - \vx\|^2 \Big\}.
\end{aligned}
\end{equation}
By standard arguments (see, e.g., Exercise 3.27 in \cite{boyd2004convex}):
\begin{equation}\label{eq:gradient-progress}
f(T_i(\vx))-f(\vx) \leq -\frac{1}{2L_i}\|\nabla_i f(\vx)\|_*^2.
\end{equation}

Without loss of generality, we will assume that the $n^{\mathrm{th}}$ block has the largest smoothness parameter and is possibly non-smooth (i.e., it can be $L_n = \infty$). The standing assumption is that exact minimization over the $n^{\mathrm{th}}$ block is ``easy'', meaning that it is computationally inexpensive and possibly solvable in closed form; for some important examples that have this property, see Appendix~\ref{app:efficient-iterations}. Observe that when block $n$ contains a small number of variables, it is often computationally inexpensive to use second-order optimization methods, such as, e.g., interior point method. 

We assume that $f(\cdot)$ is strongly convex with parameter $\mu \geq 0$, where it could be $\mu = 0$ (in which case $f(\cdot)$ is not strongly convex). Namely, $\forall \vx, \vy$:
\begin{equation}\label{eq:strong-convexity}
f(\vy) \geq f(\vx) + \innp{\nabla f(\vx), \vy - \vx} + \frac{\mu}{2}\|\vy - \vx\|^2.
\end{equation}
When $\mu > 0$, we take $\|\cdot\| = \|\cdot\|_2$, which is customary for smooth and strongly convex minimization~\cite{Bube2014}. 

Throughout the paper, whenever we take unconditional expectation, it is with respect to all randomness in the algorithm.

\subsection{Alternating Minimization}
In (standard) alternating minimization (AM), there are only two blocks of coordinates, i.e., $n = 2$. The algorithm is defined as follows.
\noindent\vsepfbox{\begin{minipage}{.47\textwidth}
\begin{equation}\label{eq:alt-min}\tag{AM}
\begin{gathered}
\vxh_{k} = \argmin_{\vx \in S_1(\vx_{k-1})} f(\vx),\\
\vx_{k} = \argmin_{\vx \in S_2(\vxh_k)} f(\vx),\\
\vx_1 \in \mathbb{R}^N \text{ is an arbitrary initial point.}
\end{gathered}
\end{equation}
\end{minipage}}
We note that for the standard analysis of alternating minimization~\cite{beck2015convergence-AM}, the exact minimization step over the smoother block can be replaced by a gradient step (Equation~(\ref{eq:grad-step-i})), while still leading to convergence that is only dependent on the smaller smoothness parameter. 

\subsection{Randomized Block Coordinate (Gradient) Descent}

The simplest version of randomized block coordinate (gradient) descent (RCDM) can be stated as~\cite{nesterov2012efficiency}:

\noindent\vsepfbox{\begin{minipage}{.47\textwidth}
\begin{equation}\label{eq:standard-BCD}\tag{RCDM}
\begin{gathered}
\text{Select } i_k\in \{1,\dots,n\} \text{ w.p. }p_{i_k} > 0,\\
\vx_{k} = T_{i_k}(\vx_{k-1}),\\
\vx_1\in \mathbb{R}^N \text{ is an arbitrary initial point,}
\end{gathered}
\end{equation}
\end{minipage}}
where $\sum_{i=1}^n p_i = 1$. 
A standard choice of the probability distribution is $p_i \sim L_i$, leading to the convergence rate that depends on the sum of block smoothness parameters.

\section{AR-BCD}\label{sec:standard-algo}

The basic version of alternating randomized block coordinate descent (AR-BCD) is a direct generalization of (\ref{eq:alt-min}) and (\ref{eq:standard-BCD}): when $n=2$, it is equivalent to (\ref{eq:alt-min}), while when the size of the $n^{\mathrm{th}}$ block is zero, it reduces to (\ref{eq:standard-BCD}). The method is stated as follows:
\noindent\vsepfbox{
\begin{minipage}{.47\textwidth}
\begin{equation}\label{eq:AR-BCD}\tag{AR-BCD}
\begin{gathered}
\text{Select } i_k\in \{1,\dots,n-1\} \text{ w.p. }p_{i_k} > 0,\\
\vxh_k = T_{i_k}(\vx_{k-1}),\\
\vx_{k} = \argmin_{\vx \in S_n(\vxh_k)}f(\vx),\\
\vx_1\in \mathbb{R}^N \text{ is an arbitrary initial point,}
\end{gathered}
\end{equation}
\end{minipage}}
where $\sum_{i=1}^{n-1} p_i = 1$. We note that nothing will change in the analysis if the step $\vxh_{k} = T_{i_k}(\vx_{k-1})$ is replaced by $\vxh_{k} = \argmin_{\vx \in S_{i_k}(\vx_{k-1})}f(\vx)$, since $\min_{\vx \in S_{i_k}(\vx_{k-1})}f(\vx) \leq f(T_{i_k}(\vx_{k-1}))$. 

In the rest of the section, we show that (\ref{eq:AR-BCD}) leads to a convergence bound that interpolates between the convergence bounds of (\ref{eq:alt-min}) and (\ref{eq:standard-BCD}): it depends on the sum of the smoothness parameters of the first $n-1$ blocks, while the dependence on the remaining problem parameters is the same for all these methods.

\subsection{Approximate Duality Gap}

To analyze (\ref{eq:AR-BCD}), we extend the approximate duality gap technique~\cite{thegaptechnique} to the setting of randomized block coordinate descent methods. The approximate duality gap $G_k$ is defined as the difference of an upper bound $U_k$ and a lower bound $L_k$ to the minimum function value $f(\vx_*)$. For (\ref{eq:AR-BCD}), we choose the upper bound to simply be $U_k = f(\vx_{k+1})$. 

The generic construction of the lower bound is as follows. Let $\vx_1, \vx_2,..., \vx_k$ be any sequence of points from $\mathbb{R}^N$ (in fact we will choose them to be exactly the sequence constructed by (\ref{eq:AR-BCD})). Then, by (strong) convexity of $f(\cdot)$, $f(\vx_*)\geq f(\vx_j) + \innp{\nabla f(\vx_j), \vx_* - \vx_j} + \frac{\mu}{2}\|\vx_* - \vx_j\|^2$, $\forall j \in \{1, \dots, k\}$. In particular, if $a_j > 0$ is a sequence of (deterministic, independent of $i_j$) positive real numbers and $A_k = \sum_{j=1}^k a_j$, then:
\begin{align}
f(\vx_*) \geq & \frac{\littlesum_{j=1}^k a_j f(\vx_j) + \littlesum_{j=1}^k a_j \innp{\nabla f(\vx_j), \vx_* - \vx_j}}{A_k}\notag\\
& + \frac{\frac{\mu}{2}\sum_{j=1}^k a_j\|\vx_* - \vx_j\|^2}{A_k}\defeq  L_k. \label{eq:lb-non-acc}
\end{align}

\subsection{Convergence Analysis} 

The main idea in the analysis is to show that $\mathbb{E}[A_k G_k - A_{k-1}G_{k-1}] \leq E_k$, for some deterministic $E_k$. Then, using linearity of expectation, $\mathbb{E}[f(\vx_{k+1})] - f(\vx_*)\leq \mathbb{E}[G_k] \leq \frac{\mathbb{E}[A_1 G_1]}{A_k} + \frac{\sum_{j=2}^k E_j}{A_k}$. 
The bound in expectation can then be turned into a bound in probability, using well-known concentration bounds. The main observation that allows us not to pay for the non-smooth block is:
\begin{observation}\label{obs:grad-ns-zero}
For $\vx_k$'s constructed by (\ref{eq:AR-BCD}), $\nabla_n f(\vx_k) = \zeros$, $\forall k$, where $\zeros$ is the vector of all zeros.
\end{observation}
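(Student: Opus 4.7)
The plan is to observe that the last step of each iteration of \ref{eq:AR-BCD} performs an exact minimization over block $n$, and then invoke the first-order optimality condition on that sub-problem. Concretely, by the definition of the algorithm,
\[
\vx_k \;=\; \argmin_{\vx \in S_n(\vxh_k)} f(\vx).
\]
The affine set $S_n(\vxh_k)$ is an unconstrained copy of the coordinates in block $n$ (only those coordinates are allowed to move, and they may take any real values). Hence the above minimization is an unconstrained convex minimization in the block-$n$ variables, with the other blocks held fixed at their values in $\vxh_k$.

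Next I would parameterize: write any $\vx \in S_n(\vxh_k)$ as $\vx = \vxh_k + I_N^n \vu$ for some $\vu \in \mathbb{R}^N$, and consider the function $g(\vu) \defeq f(\vxh_k + I_N^n \vu)$. This $g$ is convex and differentiable, and $\vx_k$ corresponds to a minimizer $\vu_*$ of $g$. The first-order optimality condition $\nabla g(\vu_*) = \zeros$ restricted to the block-$n$ coordinates yields $\nabla_n f(\vx_k) = \zeros$; coordinates outside block $n$ play no role because $I_N^n$ zeroes them out. This conclusion uses only that the minimum in the definition of $\vx_k$ is attained (which is implicit in the ``easy exact minimization'' standing assumption on block $n$).

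There is no real obstacle here; the only subtlety to state carefully is that $S_n(\vxh_k)$ is an \emph{unconstrained} affine slice in the block-$n$ directions, so the optimality condition gives a true stationarity statement $\nabla_n f(\vx_k) = \zeros$ rather than a variational inequality. The conclusion then holds for every $k \geq 1$, deterministically (regardless of the randomness in the choice of $i_k$), which is exactly what the subsequent analysis needs in order to avoid paying for $L_n$.
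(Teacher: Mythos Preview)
Your argument is correct and matches the paper's justification: the paper does not give a formal proof but simply remarks that the observation ``holds due to the step $\vx_{k} = \argmin_{\vx \in S_n(\vxh_k)}f(\vx)$,'' which is exactly the first-order optimality condition you spell out. Your parameterization via $g(\vu) = f(\vxh_k + I_N^n \vu)$ is the natural way to make this one-line remark rigorous.
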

This observation is essentially what allows us to sample $i_k$ only from the first $n-1$ blocks, and holds due to the step $\vx_{k} = \argmin_{\vx \in S_n(\vxh_k)}f(\vx)$ from (\ref{eq:AR-BCD}).

Denote $R_{\vx_*^i} = \max_{\vx\in \mathbb{R}^N}\{\|I_N^i(\vx_* - \vx)\|^2: f(\vx) \leq f(\vx_1)\}$, and let us bound the initial gap $A_1G_1$. 
\begin{proposition}\label{prop:ARBCD-initial-gap}
$\mathbb{E}[A_1G_1] \leq E_1$, where $E_1 = a_1\littlesum_{i=1}^{n-1} \left(\frac{L_i}{2p_i}-\frac{\mu}{2}\right)R_{\vx_*^i}$.
\end{proposition}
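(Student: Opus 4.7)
The plan is to unfold the definition of $A_1 G_1 = a_1(U_1 - L_1)$. Since $A_1 = a_1$, the generic lower bound (\ref{eq:lb-non-acc}) at $k=1$ collapses to
\[
L_1 = f(\vx_1) + \innp{\nabla f(\vx_1), \vx_* - \vx_1} + \tfrac{\mu}{2}\|\vx_* - \vx_1\|^2,
\]
and $U_1 = f(\vx_2)$, so I need to control $f(\vx_2) - f(\vx_1) - \innp{\nabla f(\vx_1), \vx_* - \vx_1} - \tfrac{\mu}{2}\|\vx_* - \vx_1\|^2$ in expectation. The function-value difference I would handle by using that $\vx_2$ minimizes $f$ on $S_n(\vxh_1)$, which contains $\vxh_1 = T_{i_1}(\vx_1)$; therefore $f(\vx_2) \leq f(T_{i_1}(\vx_1))$, and invoking the descent inequality (\ref{eq:gradient-progress}) and taking expectation over $i_1 \sim (p_1, \dots, p_{n-1})$ gives
\[
\mathbb{E}[f(\vx_2)] - f(\vx_1) \leq -\sum_{i=1}^{n-1} \frac{p_i}{2L_i}\|\nabla_i f(\vx_1)\|_*^2.
\]

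For the linear term, the key step is to apply Observation~\ref{obs:grad-ns-zero} at $k=1$ (which forces $\nabla_n f(\vx_1)=\zeros$) so that $\innp{\nabla f(\vx_1), \vx_* - \vx_1} = \sum_{i=1}^{n-1}\innp{\nabla_i f(\vx_1), I_N^i(\vx_* - \vx_1)}$, and then apply Young's inequality block by block with the carefully chosen weight $p_i/L_i$:
\[
-\innp{\nabla_i f(\vx_1), I_N^i(\vx_* - \vx_1)} \leq \frac{p_i}{2L_i}\|\nabla_i f(\vx_1)\|_*^2 + \frac{L_i}{2p_i}\|I_N^i(\vx_* - \vx_1)\|^2.
\]
Adding this to the descent bound produces an exact cancellation of the $\|\nabla_i f(\vx_1)\|_*^2$ terms, leaving
\[
\mathbb{E}[A_1 G_1] \leq a_1 \sum_{i=1}^{n-1}\frac{L_i}{2p_i}\|I_N^i(\vx_* - \vx_1)\|^2 - a_1\frac{\mu}{2}\|\vx_* - \vx_1\|^2.
\]

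To match the stated form of $E_1$, I would use $\|\vx_* - \vx_1\|^2 = \sum_{i=1}^n \|I_N^i(\vx_* - \vx_1)\|^2 \geq \sum_{i=1}^{n-1}\|I_N^i(\vx_* - \vx_1)\|^2$ (valid under the Euclidean norm used when $\mu > 0$, and vacuous when $\mu = 0$), merge the two sums, and finally bound each $\|I_N^i(\vx_* - \vx_1)\|^2 \leq R_{\vx_*^i}$ using that $\vx_1$ trivially satisfies $f(\vx_1) \leq f(\vx_1)$. The main subtlety, and the step I expect to require the most care, is the application of Observation~\ref{obs:grad-ns-zero} at $k=1$: it implicitly assumes the arbitrary initial point has been preprocessed by one exact minimization over block $n$ so that $\nabla_n f(\vx_1)=\zeros$. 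Without this, a stray $\|\nabla_n f(\vx_1)\|_*^2$ term would survive the Young step and destroy the cancellation, which is exactly what makes the gradient on the non-smooth block free in the bound.
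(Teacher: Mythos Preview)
Your argument is essentially the paper's own: bound $U_1$ via the descent lemma~(\ref{eq:gradient-progress}), use $\nabla_n f(\vx_1)=\zeros$ together with a Young/dual-norm inequality on the linear term of $-L_1$ to cancel the $\|\nabla_i f(\vx_1)\|_*^2$ contributions, then absorb the $\mu$-term blockwise under the Euclidean assumption. Aside from an indexing slip (in the paper's convention the first AR-BCD step produces $\vxh_2=T_{i_2}(\vx_1)$ and $\vx_2\in S_n(\vxh_2)$, so the randomness is over $i_2$, not $i_1$), your proposal matches, and your closing observation that Observation~\ref{obs:grad-ns-zero} at $k=1$ tacitly requires $\vx_1$ to already be minimized over block $n$ is a fair point that the paper leaves implicit.
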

\begin{proof}
By linearity of expectation, $\mathbb{E}[A_1G_1] = \mathbb{E}[A_1U_1] - \mathbb{E}[A_1L_1].$ The initial lower bound is deterministic, and, by  $\nabla_n f(\vx_1) = \zeros$ and duality of norms, is bounded as:
\begin{equation*}
\begin{aligned}
\mathbb{E}[A_1L_1]\geq & a_1f(\vx_1) - a_1 \littlesum_{i=0}^{n-1} \|\nabla_i f(\vx_1)\|_*\|\vx_*^i - \vx_1^i\|\\
& + a_1\frac{\mu}{2}\|\vx_* - \vx_1\|^2.
\end{aligned}
\end{equation*}
Using (\ref{eq:gradient-progress}), if $i_2 = i$, then:
\begin{equation*}
U_1 = f(\vx_2) \leq f(\vxh_2) \leq f(\vx_1) - \frac{1}{2L_i}\|\nabla_i f(\vx_1)\|_*^2.
\end{equation*}
Since block $i$ is selected with probability $p_i$ and $A_1 = a_1$:
\begin{align*}
\mathbb{E}[A_1 U_1] \leq & a_1 f(\vx_1) - \littlesum_{i=1}^{n-1} \frac{a_1p_i}{2L_i}\|\nabla_i f(\vx_i)\|_*^2. 
\end{align*}
Since the inequality $2ab - a^2 \leq b^2$ holds $\forall a, b$, we have:
\begin{equation*}
\begin{aligned}
& a_1 \|\nabla_i f(\vx_1)\|_*\|\vx_*^i - \vx_1^i\| - \frac{a_1p_i}{2L_i}\|\nabla_i f(\vx_i)\|_*^2\\
&\hspace{1cm}\leq \frac{a_1L_i}{2p_i}\|\vx_*^i - \vx_1^i\|^2, \; \forall i \in \{1,\dots, n-1\}
\end{aligned}
\end{equation*}
Hence, when $\mu = 0$, $\mathbb{E}[A_1 G_1] \leq \sum_{i=1}^{n-1}\frac{a_1L_i}{2p_i}\|\vx_*^i - \vx_1^i\|^2$. When $\mu > 0,$ since in that case we are assuming $\|\cdot\| = \|\cdot\|_2$ (Section~\ref{sec:prelims}), $\|\vx_* - \vx_1\|^2 \geq \sum_{i=1}^{n-1}\|\vx_*^i - \vx_1^i\|^2$, leading to $\mathbb{E}[A_1 G_1] \leq a_1\sum_{i=1}^{n-1}\left(\frac{L_i}{2p_i}- \frac{\mu}{2}\right)\|\vx_*^i - \vx_1^i\|^2$.
\end{proof}

We now show how to bound the error in the decrease of the scaled gap $A_kG_k$.

\begin{lemma}\label{lemma:ARBCD-gap-decrease}
$\mathbb{E}[A_kG_k - A_{k-1}G_{k-1}] \leq E_k$, where $E_k = a_k\sum_{i=1}^{n-1}\left(\frac{{a_k}L_i}{2A_kp_i}-\frac{\mu}{2}\right)R_{\vx_*^i}$.
\end{lemma}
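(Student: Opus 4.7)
The plan is to mimic the structure of Proposition~\ref{prop:ARBCD-initial-gap}, but applied to the one-step difference of the scaled gap rather than to the initial gap. First I would decompose the difference explicitly. Since $A_kU_k - A_{k-1}U_{k-1} = A_k f(\vx_{k+1}) - A_{k-1} f(\vx_k)$ and, because the lower bound is constructed additively,
\[
A_kL_k - A_{k-1}L_{k-1} = a_k\Bigl[f(\vx_k) + \innp{\nabla f(\vx_k), \vx_* - \vx_k} + \tfrac{\mu}{2}\|\vx_* - \vx_k\|^2\Bigr],
\]
subtracting gives
\[
A_kG_k - A_{k-1}G_{k-1} = A_k\bigl[f(\vx_{k+1})-f(\vx_k)\bigr] - a_k\innp{\nabla f(\vx_k),\vx_*-\vx_k} - \tfrac{a_k\mu}{2}\|\vx_*-\vx_k\|^2.
\]

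Second, I would bound the first (upper-bound) term. Conditioning on $\vx_k$ and applying~(\ref{eq:gradient-progress}) to $\vxh_{k+1} = T_{i_{k+1}}(\vx_k)$, together with $f(\vx_{k+1}) \leq f(\vxh_{k+1})$, and averaging over the distribution of $i_{k+1}$, yields $\mathbb{E}\bigl[f(\vx_{k+1})\bigm|\vx_k\bigr] \leq f(\vx_k) - \sum_{i=1}^{n-1}\frac{p_i}{2L_i}\|\nabla_i f(\vx_k)\|_*^2$. Third, I would handle the inner product using Observation~\ref{obs:grad-ns-zero} (which gives $\nabla_n f(\vx_k) = \zeros$, so only blocks $1,\dots,n-1$ contribute) and duality of norms to obtain $-a_k\innp{\nabla f(\vx_k),\vx_*-\vx_k} \leq a_k\sum_{i=1}^{n-1}\|\nabla_i f(\vx_k)\|_*\|\vx_*^i - \vx_k^i\|$.

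Fourth, combining these bounds I get a sum over $i=1,\dots,n-1$ of terms of the form
\[
a_k\|\nabla_i f(\vx_k)\|_*\|\vx_*^i - \vx_k^i\| - \tfrac{A_k p_i}{2L_i}\|\nabla_i f(\vx_k)\|_*^2,
\]
to which I apply $2ab - a^2 \leq b^2$ with $a=\sqrt{A_k p_i/(2L_i)}\|\nabla_i f(\vx_k)\|_*$, exactly as in the initial-gap proof; this cancels the gradient norms and leaves $\frac{a_k^2 L_i}{2A_k p_i}\|\vx_*^i - \vx_k^i\|^2$ for each $i$. Finally, for $\mu=0$ I bound $\|\vx_*^i - \vx_k^i\|^2 \leq R_{\vx_*^i}$ using that $f(\vx_k)\leq f(\vx_1)$ (the gradient step is monotone). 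For $\mu>0$, using $\|\cdot\|=\|\cdot\|_2$ I split $\|\vx_*-\vx_k\|^2 \geq \sum_{i=1}^{n-1}\|\vx_*^i-\vx_k^i\|^2$, discard the nonpositive $n$-th block contribution, and group coefficients to recover the claimed $E_k$. Taking total expectation completes the proof.

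The main obstacle I expect is bookkeeping: being precise about what is random versus deterministic at each step (in particular that $a_k, A_k, p_i, L_i$ are deterministic, while $\vx_k$, $\vx_{k+1}$, and $i_{k+1}$ are random), and matching the per-block splitting of the strong-convexity term in the $\mu>0$ case so that the $-\mu/2$ ends up inside the same sum over the first $n-1$ blocks as the smoothness term.
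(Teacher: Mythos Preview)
Your proposal is correct and follows essentially the same approach as the paper: condition on $\vx_k$, bound the upper-bound change via~(\ref{eq:gradient-progress}) and the lower-bound change via Observation~\ref{obs:grad-ns-zero} plus duality of norms, then apply the quadratic inequality $2ab - a^2 \leq b^2$ blockwise and use the definition of $R_{\vx_*^i}$. The only cosmetic difference is that the paper replaces $\|\vx_*^i - \vx_k^i\|$ by $\sqrt{R_{\vx_*^i}}$ before the quadratic inequality rather than after, which is immaterial.
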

\begin{proof}
Let $\mathcal{F}_k$ denote the natural filtration up to iteration $k$. By linearity of expectation and $A_{k}L_k - A_{k-1}L_{k-1}$ being measurable w.r.t. $\mathcal{F}_k$, 
\begin{align*}
&\mathbb{E}[A_kG_k - A_{k-1}G_{k-1}|\mathcal{F}_{k}]\\
&\hspace{.2cm}= \mathbb{E}[A_kU_k - A_{k-1}U_{k-1}|\mathcal{F}_{k}] - (A_k L_k - A_{k-1}L_{k-1}).
\end{align*}
With probability $p_i$ and as $f(\vx_{k+1}) \leq f(\vxh_{k+1})$, the change in the upper bound is:
\begin{align}
A_k U_k - A_{k-1}U_{k-1} \leq & A_k f(\vxh_{k+1}) - A_{k-1}f(\vx_k)\notag\\
\leq &  a_k f(\vx_k) - \frac{A_k}{2L_i}\|\nabla_i f(\vx_k)\|_*^2\notag,
\end{align}
where the second line follows from $\vxh_{k+1} = T_{i_k}(\vx_k)$ and Equation (\ref{eq:gradient-progress}). Hence:
\begin{align*}
&\mathbb{E}[A_k U_k - A_{k-1}U_{k-1}|\mathcal{F}_{k}]\\
&\hspace{1cm}\leq a_k f(\vx_k) - A_k\littlesum_{i=1}^{n-1}\frac{p_i}{2L_i}\|\nabla_i f(\vx_k)\|_*^2.
\end{align*}
On the other hand, using the duality of norms, the change in the lower bound is:
\begin{align*}
&A_kL_k - A_{k-1}L_{k-1}\\
&\hspace{1cm}\geq a_k f(\vx_k) - a_k \littlesum_{i=1}^{n-1} \|\nabla_i f(\vx_k)\|_*\|\vx_*^i - \vx_k^i\|\\
&\hspace{1.4cm}+ a_k\frac{\mu}{2}\|\vx_* - \vx_k\|^2\\
&\hspace{1cm}\geq a_k f(\vx_k) - a_k \littlesum_{i=1}^{n-1} \|\nabla_i f(\vx_k)\|_* \sqrt{R_{\vx_*^i}}\\
&\hspace{1.4cm}+ a_k\frac{\mu}{2}\|\vx_* - \vx_k\|^2.
\end{align*}
By the same argument as in the proof of Proposition~\ref{prop:ARBCD-initial-gap}, it follows that: $\mathbb{E}[A_k G_k - A_{k-1}G_{k-1}|\mathcal{F}_k] \leq a_k\sum_{i=1}^{n-1}\left(\frac{L_i{a_k}}{2A_kp_i}-\frac{\mu}{2}\right)R_{\vx_*^i} = E_k$. Taking expectations on both sides, as $E_k$ is deterministic, the proof follows.
\end{proof}

We are now ready to prove the convergence bound for (\ref{eq:AR-BCD}), as follows.

\begin{theorem}\label{thm:ARBCD-convergence}
Let $\vx_k$ evolve according to (\ref{eq:AR-BCD}). Then, $\forall k \geq 1$:
\begin{enumerate}
\item If $\mu = 0:$
$
\mathbb{E}[f(\vx_{k+1})] - f(\vx_*) \leq \frac{2\littlesum_{i=1}^{n-1}\frac{L_i}{p_i}R_{\vx_*^i}}{k+3}.
$ 
In particular, for $p_i = \frac{L_i}{\sum_{i' = 1}^{n-1} L_{i'}},$ $1\leq i \leq n-1$:
$$
\mathbb{E}[f(\vx_{k+1})] - f(\vx_*) \leq \frac{2(\sum_{i'=1}^{n-1}L_{i'}) \littlesum_{i=1}^{n-1}R_{\vx_*^i}}{k+3}.
$$
Similarly, for $p_i = \frac{1}{n-1}$, $1\leq i\leq n-1:$
$$
\mathbb{E}[f(\vx_{k+1})] - f(\vx_*) \leq \frac{2(n-1)\littlesum_{i=1}^{n-1}{L_i}R_{\vx_*^i}}{k+3}
$$
\item If $\mu > 0$, $p_i = \frac{L_i}{\sum_{i' = 1}^{n-1}L_{i'}}$ and $\|\cdot\| = \|\cdot\|_2:$
\begin{align*}
&\mathbb{E}[f(\vx_{k+1})] - f(\vx_*)\\
&\hspace{1cm}\leq \Big(1 - \frac{\mu}{\sum_{i'=1}^{n-1}L_{i'}}\Big)^k\\
& \hspace{1.5cm}\cdot \frac{(\sum_{i'=1}^{n-1}L_{i'})\|(I_N - I_N^n)(\vx_* - \vx_1)\|^2}{2}.
\end{align*}
\end{enumerate}
\end{theorem}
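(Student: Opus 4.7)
The plan is to telescope the one-step gap bound from Lemma \ref{lemma:ARBCD-gap-decrease}, seeded by Proposition \ref{prop:ARBCD-initial-gap}, and then pick the sequence $\{a_j\}$ that matches each regime. Since the lower bound $L_k$ constructed from (strong) convexity actually satisfies $L_k \leq f(\vx_*)$ while $U_k = f(\vx_{k+1}) \geq f(\vx_*)$, we have $f(\vx_{k+1}) - f(\vx_*) \leq G_k$ pointwise, so taking expectations and using the telescoping identity $A_k G_k = A_1 G_1 + \sum_{j=2}^k (A_j G_j - A_{j-1} G_{j-1})$ gives
\begin{equation*}
\mathbb{E}[f(\vx_{k+1})] - f(\vx_*) \;\leq\; \mathbb{E}[G_k] \;=\; \frac{\mathbb{E}[A_k G_k]}{A_k} \;\leq\; \frac{E_1 + \sum_{j=2}^k E_j}{A_k}.
\end{equation*}
All subsequent work consists of choosing $a_j$ and bounding this ratio.

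For the case $\mu = 0$, I would take $a_j = j+1$, so $A_k = \sum_{j=1}^k (j+1) = k(k+3)/2$. A direct calculation gives $a_j^2/A_j = 2(j+1)^2/(j(j+3)) \leq 2$ for every $j \geq 1$, hence $\sum_{j=1}^k a_j^2/(2A_j) \leq k$. Substituting into $E_j = \sum_i a_j^2 L_i R_{\vx_*^i}/(2A_j p_i)$ and dividing by $A_k = k(k+3)/2$ yields exactly the claimed $\tfrac{2\sum_i L_i R_{\vx_*^i}/p_i}{k+3}$ bound. The two specialized forms then follow by plugging in $p_i = L_i / \sum_{i'} L_{i'}$ (which collapses $L_i/p_i$ to the common value $\sum_{i'}L_{i'}$) and $p_i = 1/(n-1)$, respectively.

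For $\mu > 0$, the strategy is to choose $a_j$ so that $E_j$ vanishes for $j \geq 2$, leaving only $E_1$. With $p_i = L_i/S$ where $S = \sum_{i=1}^{n-1} L_i$, the bracket in $E_j$ becomes $a_j S/(2A_j) - \mu/2$, which is zero precisely when $a_j/A_j = \mu/S$; writing $A_{j-1} = A_j - a_j$ gives the recursion $A_j = A_{j-1}/(1-\mu/S)$ and hence $A_k = A_1 (1-\mu/S)^{-(k-1)}$. Meanwhile, the strongly convex branch of Proposition \ref{prop:ARBCD-initial-gap} (inspecting its proof, one obtains the tighter constant $a_1 \sum_i(L_i/(2p_i) - \mu/2)\|\vx_*^i - \vx_1^i\|^2$ before the $R_{\vx_*^i}$ relaxation) specialises, under $p_i = L_i/S$, to $E_1 \leq a_1 (S-\mu)/2 \cdot \|(I_N - I_N^n)(\vx_* - \vx_1)\|^2$, since $\sum_{i=1}^{n-1}\|\vx_*^i - \vx_1^i\|^2 = \|(I_N - I_N^n)(\vx_* - \vx_1)\|^2$. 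Dividing and using $(S-\mu)(1-\mu/S)^{k-1} = S(1-\mu/S)^k$ delivers the advertised bound. The only nontrivial obstacle in this whole plan is verifying that the recursion $a_j/A_j = \mu/S$ is simultaneously consistent with the choice of $a_1$ made in computing $E_1$; this is immediate because the recursion only constrains ratios $a_j/A_j$ for $j\geq 2$, leaving $A_1 = a_1$ free to cancel against $E_1$.
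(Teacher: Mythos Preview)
Your proposal is correct and follows essentially the same approach as the paper's proof: telescope Proposition~\ref{prop:ARBCD-initial-gap} and Lemma~\ref{lemma:ARBCD-gap-decrease}, choose $a_j \propto j+1$ in the smooth case to make $a_j^2/A_j$ bounded, and in the strongly convex case fix $a_j/A_j = \mu/\sum_{i'} L_{i'}$ for $j\geq 2$ so that $E_j$ vanishes, then read off the tighter $\|\vx_*^i-\vx_1^i\|^2$ form of $E_1$ from the proof of Proposition~\ref{prop:ARBCD-initial-gap}. The only cosmetic difference is that the paper takes $a_j=(j+1)/2$ rather than your $a_j=j+1$, which is an irrelevant global rescaling.
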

\begin{proof}
From Proposition~\ref{prop:ARBCD-initial-gap} and Lemma~\ref{lemma:ARBCD-gap-decrease}, by linearity of expectation and the definition of $G_k$:
\begin{equation}
\mathbb{E}[f(\vx_{k+1})] - f(\vx_*) \leq \mathbb{E}[G_k] \leq \frac{\sum_{j=1}^k E_j}{A_k},
\end{equation}
where $E_j = \frac{{a_j}^2}{A_j}\sum_{i=1}^{n-1}\frac{L_i}{2p_i}R_{\vx_*^i}$.

Notice that the algorithm does not depend on the sequence $\{a_j\}$ and thus we can choose it arbitrarily. Suppose that $\mu = 0$. Let $a_j = \frac{j+1}{2}$. Then $\frac{{a_j}^2}{A_j} = \frac{(j+1)^2}{j(j+3)}\leq 1$, and thus:
$
\frac{\sum_{j=1}^k E_j}{A_k} \leq \frac{2\littlesum_{i=1}^{n-1}\frac{L_i}{p_i}R_{\vx_*^i}}{k+3}, 
$
which proves the first part of the theorem, up to concrete choices of $p_i$'s, which follow by simple computations.

For the second part of the theorem, as $\mu > 0$, we are assuming that $\|\cdot\| = \|\cdot\|_2$, as discussed in Section~\ref{sec:prelims}. From Lemma~\ref{lemma:ARBCD-gap-decrease}, $E_j = a_j \sum_{i=1}^{n-1} \left(\frac{a_j L_i}{2A_jp_i} - \frac{\mu}{2}\right)R_{\vx_*^i}$, $\forall j \geq 2$. As $p_i = \frac{L_i}{\sum_{i'=1}^{n-1}L_{i'}}$, if we take $\frac{a_j}{A_j} = \frac{\mu}{\sum_{i'=1}^{n-1}L_{i'}}$, it follows that $E_j = 0$, $\forall j \geq 2$. Let $a_1 = A_1 = 1$ and $\frac{a_j}{A_j} = \frac{\mu}{\sum_{i'=1}^{n-1}L_{i'}}$ for $j \geq 2$. Then: 
$
\mathbb{E}[f(\vx_{k+1})] - f(\vx_*) \leq \mathbb{E}[G_k]
\leq  \frac{\mathbb{E}[A_1 G_1]}{A_k}.
$ 
As $\frac{A_1}{A_k} = \frac{A_1}{A_2}\cdot \frac{A_2}{A_3}\cdot \dots \cdot \frac{A_{k-1}}{A_k}$ and $\frac{A_{j-1}}{A_j} = 1 - \frac{a_j}{A_j}$: 
$
\mathbb{E}[f(\vx_{k+1})] - f(\vx_*) 
\leq  \Big(1 - \frac{\mu}{\sum_{i'=1}^{n-1}L_{i'}}\Big)^{k-1}\mathbb{E}[G_1].
$ 
It remains to observe that, from Proposition~\ref{prop:ARBCD-initial-gap}, $\mathbb{E}[G_1] \leq \big(1 - \frac{\mu}{\sum_{i'=1}^{n-1}L_{i'}}\big)\frac{(\sum_{i'=1}^{n-1}L_{i'})\|(I_N - I_N^n)(\vx_* - \vx_1)\|^2}{2}$.
\end{proof}

We note that when $n=2$, the asymptotic convergence of~\ref{eq:AR-BCD}  coincides with the convergence of alternating minimization~\cite{beck2015convergence-AM}. When $n^{\mathrm{th}}$ block is empty (i.e., when all blocks are sampled with non-zero probability and there is no exact minimization over a least-smooth block), we obtain the convergence bound of the standard randomized coordinate descent method~\cite{nesterov2012efficiency}.
\section{Accelerated AR-BCD}\label{sec:acc-algo}

In this section, we show how to accelerate (\ref{eq:AR-BCD})  when $f(\cdot)$ is smooth. We believe it is possible to obtain similar results in the smooth and strongly convex case, which we defer to a future version of the paper. 
%
Denote:
\begin{gather}
\Delta_k = I_N^{i_k} \nabla f(\vx_k)/p_{i_k},\label{eq:Delta_k-def}\notag\\
\vv_k = \argmin_{\vu}\Big\{\littlesum_{j=1}^k a_j \innp{\Delta_j, \vu}
\notag\\
\hspace{3cm}
+ \littlesum_{i=1}^n \frac{\sigma_i}{2}\|\vu^i - \vx_1^i\|^2\Big\},\label{eq:arg-min-lb}
\end{gather}
where $\sigma_i > 0$, $\forall i$, will be specified later.
Accelerated AR-BCD (AAR-BCD) is defined as follows:
\vsepfbox{
\begin{minipage}{.47\textwidth}
\begin{equation}\label{eq:AAR-BCD}\tag{AAR-BCD}
\begin{gathered}
\text{Select }i_k \text{ from } \{1,\dots, n-1\} \text{ w.p. } p_{i_k},\\
\vxh_k = \frac{A_{k-1}}{A_k}\vy_{k-1} + \frac{a_k}{A_k}\vv_{k-1},\\
\vx_k = \argmin_{\vx \in S_n(\vxh_k)}f(\vx),\\
\vy_k = \vx_k + \frac{a_k}{p_{i_k}A_k}I_N^{i_k}(\vv_{k}-\vv_{k-1}),\\
\vx_1 \text{ is an arbitrary initial point,}
\end{gathered}
\end{equation}
\end{minipage}}
where $\sum_{i=1}^{n-1}p_i = 1$, $p_i > 0$, $\forall i \in \{1,\dots, n-1\}$, and $\vv_k$ is defined by (\ref{eq:arg-min-lb}). To seed the algorithm, we further assume that 
$\vy_1 = \vx_1 + I_N^{i_1}\frac{1}{{p_{i_1}}}(\vv_1 - \vx_1)$. 

\begin{remark}\label{remark:iteration-complexity}
Iteration complexity of (\ref{eq:AAR-BCD}) is dominated by the computation of $\vxh_k,$ which requires updating an entire vector. This type of an update is not unusual for accelerated block coordinate descent methods, and in fact appears in all such methods we are aware of~\cite{nesterov2012efficiency,lee2013efficient,lin2014accelerated,fercoq2015accelerated,allen2016even}. In most cases of practical interest, however, it is possible to implement this step efficiently (using that $\vv_k$ changes only over block $i_k$ in iteration $k$). More details are provided in Appendix~\ref{app:efficient-iterations}.
\end{remark}

To analyze the convergence of \ref{eq:AAR-BCD}, we will need to construct a more sophisticated duality gap than in the previous section, as follows.

\subsection{Approximate Duality Gap}

\begin{figure*}[ht!]
\begin{equation}\label{eq:rand-lb}
\Lambda_k = \frac{\sum_{j=1}^k a_j f(\vx_j) + \min_{\vu \in \mathbb{R}^N}\left\{\sum_{j=1}^k a_j\innp{\Delta_j, \vu - \vx_j}
+ \littlesum_{i=1}^{n-1} \frac{\sigma_i}{2}\|\vu^i - \vx_1^i\|^2\right\}- \littlesum_{i=1}^{n-1} \frac{\sigma_i}{2}\|\vx_*^i - \vx_1^i\|^2}{A_k}.
\end{equation}
\vspace{-15pt}
\end{figure*}

We define the upper bound to be $U_k = f(\vy_k)$. 
The constructed lower bound $L_k$ from previous subsection is not directly useful for the analysis of (\ref{eq:AAR-BCD}). Instead, we will construct a random variable $\Lambda_k$, which in expectation is upper bounded by $f(\vx^*)$. The general idea, as in previous subsection, is to show that some notion of approximate duality gap decreases in expectation. 

Towards constructing $\Lambda_k,$ we first prove the following technical proposition, whose proof is in Appendix~\ref{app:omitted-proofs}.
\begin{proposition}\label{prop:expect-Delta_k}
Let $\vx_k$ be as in (\ref{eq:AAR-BCD}). Then: $$
\mathbb{E}[\littlesum_{j=1}^k a_j \innp{\Delta_j, \vx_* - \vx_j}] = \mathbb{E}[\littlesum_{j=1}^k a_j \innp{\nabla f(\vx_j), \vx_* - \vx_j}].
$$
\end{proposition}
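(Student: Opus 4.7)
The plan is to apply the tower property of conditional expectation summand by summand, with respect to the natural filtration $\mathcal{F}_{j-1} = \sigma(i_1,\dots,i_{j-1})$. The key structural observation is that in \ref{eq:AAR-BCD}, the auxiliary point $\vxh_j$ is built from $\vy_{j-1}$ and $\vv_{j-1}$ \emph{before} $i_j$ is drawn, and so $\vx_j = \argmin_{\vx \in S_n(\vxh_j)} f(\vx)$ is $\mathcal{F}_{j-1}$-measurable; only the subsequent update of $\vy_j$, through $\Delta_j$ and $\vv_j$, injects the randomness of $i_j$.

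Conditioning on $\mathcal{F}_{j-1}$, I would first compute
$$
\mathbb{E}[\Delta_j \mid \mathcal{F}_{j-1}] = \littlesum_{i=1}^{n-1} p_i \cdot \frac{I_N^i \nabla f(\vx_j)}{p_i} = (I_N - I_N^n)\nabla f(\vx_j),
$$
using that $i_j$ is independent of $\mathcal{F}_{j-1}$ and that $\vx_j$ is $\mathcal{F}_{j-1}$-measurable. I would then invoke Observation~\ref{obs:grad-ns-zero}: the exact minimization $\vx_j = \argmin_{\vx \in S_n(\vxh_j)} f(\vx)$ enforces $\nabla_n f(\vx_j) = \zeros$ by first-order optimality, so $(I_N - I_N^n)\nabla f(\vx_j) = \nabla f(\vx_j)$.

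Since $\vx_* - \vx_j$ is $\mathcal{F}_{j-1}$-measurable and $a_j$ is deterministic, linearity of conditional expectation gives
$$
\mathbb{E}\bigl[a_j \innp{\Delta_j, \vx_* - \vx_j} \mid \mathcal{F}_{j-1}\bigr] = a_j \innp{\nabla f(\vx_j), \vx_* - \vx_j}.
$$
Summing over $j = 1, \dots, k$ and taking total expectation yields the stated identity.

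The main subtlety, which I expect to be the trickiest point to articulate, is justifying the $\mathcal{F}_{j-1}$-measurability of $\vx_j$; this hinges on the precise order of operations in \ref{eq:AAR-BCD} and would fail if $\vxh_j$ depended on $i_j$. The base case $j = 1$ additionally requires the convention (implicit already in the preceding section) that the initial point is taken so that $\nabla_n f(\vx_1) = \zeros$, i.e.\ that one exact minimization over block $n$ is performed up front; once this is granted, the same conditioning argument applies uniformly across all $j$.
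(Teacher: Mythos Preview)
Your proposal is correct and follows essentially the same approach as the paper: both compute $\mathbb{E}[\Delta_j\mid\mathcal{F}_{j-1}]=\nabla f(\vx_j)$ using $\nabla_n f(\vx_j)=\zeros$ and the $\mathcal{F}_{j-1}$-measurability of $\vx_j$, then sum and take expectations. Your write-up is in fact a bit more explicit than the paper's about why $\vx_j$ is $\mathcal{F}_{j-1}$-measurable (the paper uses this without comment), and your remark about the base case $j=1$ needing $\nabla_n f(\vx_1)=\zeros$ is a genuine subtlety that the paper treats only implicitly.
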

Define the randomized lower bound as in Eq.~(\ref{eq:rand-lb}), and observe that (\ref{eq:arg-min-lb}) defines $\vv_k$ as the argument of the minimum from $\Lambda_k$. 
The crucial property of $\Lambda_k$ is that it lower bounds $f(\vx_*)$ in expectation, as shown in the following lemma.

\begin{lemma}\label{lemma:Lambda_k-is-lb}
Let $\vx_k$ be as in (\ref{eq:AAR-BCD}). Then
$
f(\vx_*) \geq \mathbb{E}[\Lambda_k].
$
\end{lemma}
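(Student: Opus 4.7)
The plan is to chain together the convexity of $f$, Proposition~\ref{prop:expect-Delta_k}, and the optimality of the minimizer inside $\Lambda_k$, so that $\mathbb{E}[\Lambda_k]$ is squeezed below $f(\vx_*)$ by plugging $\vu = \vx_*$ into the min.

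First, by convexity of $f$, for every $j$ we have $f(\vx_j) + \innp{\nabla f(\vx_j), \vx_* - \vx_j} \leq f(\vx_*)$. Taking a convex combination with weights $a_j/A_k$ and rearranging gives the deterministic inequality
\begin{equation*}
A_k f(\vx_*) \geq \littlesum_{j=1}^k a_j f(\vx_j) + \littlesum_{j=1}^k a_j \innp{\nabla f(\vx_j), \vx_* - \vx_j}.
\end{equation*}
Taking expectations on both sides and then invoking Proposition~\ref{prop:expect-Delta_k} to replace the $\nabla f(\vx_j)$ terms with $\Delta_j$ terms inside the expectation yields
\begin{equation*}
A_k f(\vx_*) \geq \mathbb{E}\Big[\littlesum_{j=1}^k a_j f(\vx_j) + \littlesum_{j=1}^k a_j \innp{\Delta_j, \vx_* - \vx_j}\Big].
\end{equation*}

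Next, I bound $A_k \Lambda_k$ from above in a pointwise (sample-path) manner by evaluating the minimum in its definition at the feasible choice $\vu = \vx_*$:
\begin{equation*}
\min_{\vu} \Big\{\littlesum_{j=1}^k a_j \innp{\Delta_j, \vu - \vx_j} + \littlesum_{i=1}^{n-1} \tfrac{\sigma_i}{2}\|\vu^i - \vx_1^i\|^2\Big\} \leq \littlesum_{j=1}^k a_j \innp{\Delta_j, \vx_* - \vx_j} + \littlesum_{i=1}^{n-1} \tfrac{\sigma_i}{2}\|\vx_*^i - \vx_1^i\|^2.
\end{equation*}
The quadratic regularizer terms cancel against the $-\sum_{i=1}^{n-1}\tfrac{\sigma_i}{2}\|\vx_*^i - \vx_1^i\|^2$ already subtracted in the numerator of $\Lambda_k$, leaving
\begin{equation*}
A_k \Lambda_k \leq \littlesum_{j=1}^k a_j f(\vx_j) + \littlesum_{j=1}^k a_j \innp{\Delta_j, \vx_* - \vx_j}.
\end{equation*}

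Finally, taking expectations of this last inequality and combining it with the expectation inequality from the first paragraph gives $A_k \mathbb{E}[\Lambda_k] \leq A_k f(\vx_*)$, i.e., $\mathbb{E}[\Lambda_k] \leq f(\vx_*)$, as claimed. The only non-routine step is Proposition~\ref{prop:expect-Delta_k}, which handles the fact that $\Delta_j$ is a coordinate-block rescaling of the gradient depending on the random $i_j$; since that proposition is given, the present lemma reduces to plugging $\vx_*$ into the defining minimization and canceling the quadratic terms, so no real obstacle remains.
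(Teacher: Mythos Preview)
Your proof is correct and follows essentially the same approach as the paper: convexity of $f$ gives the weighted inequality, Proposition~\ref{prop:expect-Delta_k} lets you swap $\nabla f(\vx_j)$ for $\Delta_j$ in expectation, and plugging $\vu=\vx_*$ into the minimization cancels the quadratic regularizer against the subtracted term. The only cosmetic difference is ordering: the paper adds and subtracts the quadratic term first and then lower-bounds by the min, whereas you upper-bound $A_k\Lambda_k$ pointwise and then combine, but the content is identical.
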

\begin{proof}
By convexity of $f(\cdot),$ for any sequence  $\{\tilde{\vx}_j\}$ from $\mathbb{R}^N$, $f(\vx_*)\geq \frac{\sum_{j=1}^k a_j (f(\tilde{\vx}_j) + \innp{\nabla f(\tilde{\vx}_j), \vx_* - \tilde{\vx}_j})}{A_k}$. Since the statement holds for any sequence $\{\tilde{\vx}_j\},$ it also holds if $\{\tilde{\vx}_j\}$ is selected according to some probability distribution. In particular, for $\{\tilde{\vx}_j\} = \{\vx_j\}$: 
\begin{align*}
f(\vx_*)\geq &\mathbb{E}\Big[\frac{\sum_{j=1}^k a_j (f(\vx_j) + \innp{\nabla f(\vx_j), \vx_* - \vx_j})}{A_k}\Big].
\end{align*}
By linearity of expectation and Proposition~\ref{prop:expect-Delta_k}:
\begin{equation}\label{eq:lb-in-exp}
\begin{aligned}
f(\vx_*)\geq \mathbb{E}\Big[\frac{\sum_{j=1}^k a_j (f(\vx_j) + \innp{\Delta_j, \vx_* - \vx_j})}{A_k} \Big].
\end{aligned}
\end{equation}
Adding and subtracting (deterministic) $\littlesum_{i=1}^{n-1} \frac{\sigma_i}{2}\|\vx_*^i - \vx_1^i\|^2$ to/from (\ref{eq:lb-in-exp}) and using that:
\begin{equation*}
\begin{aligned}
&\littlesum_{j=1}^k a_j \innp{\Delta_j, \vx_* - \vx_j} + \littlesum_{i=1}^{n-1} \frac{\sigma_i}{2}\|\vx_*^i - \vx_1^i\|^2\\
&\hspace{.5cm}\geq \min_{\vu}\Big\{ \littlesum_{j=1}^k a_j \innp{\Delta_j, \vu - \vx_j}+\littlesum_{i=1}^{n-1} \frac{\sigma_i}{2}\|\vu^i - \vx_1^i\|^2
\Big\}\\
&\hspace{.5cm}= \min_{\vu} m_k(\vu), 
\end{aligned}
\end{equation*}
where $m_k(\vu) = \littlesum_{j=1}^k a_j \innp{\Delta_j, \vu - \vx_j}+ \littlesum_{i=1}^{n-1} \frac{\sigma_i}{2}\|\vu^i - \vx_1^i\|^2
$, it follows that:
\begin{align*}
&f(\vx_*) \geq \mathbb{E}\Big[\frac{\sum_{j=1}^k a_j f(\vx_j) - \littlesum_{i=1}^{n-1} \frac{\sigma_i}{2}\|\vx_*^i - \vx_1^i\|^2}{A_k}\\
&\hspace{1.7cm} + \frac{\min_{\vu \in \mathbb{R}^N} m_k(\vu)}{A_k}\Big],
\end{align*}
which is equal to $\mathbb{E}[\Lambda_k],$ and completes the proof.
\end{proof}
%
Similar as before, define the approximate gap as $\Gamma_k = U_k - \Lambda_k$. 
Then, we can bound the initial gap as follows.
\begin{proposition}\label{prop:AAR-BCD-init-gap}
If $a_1 = \frac{{a_1}^2}{A_1} \leq \frac{\sigma_i{p_i}^2}{L_i}$, $\forall i\in\{1,..., n-1\}$, then $\mathbb{E}[A_1\Gamma_1] \leq \littlesum_{i=1}^{n-1} \frac{\sigma_i}{2}\|\vx_* - \vx_1\|^2$. 
\end{proposition}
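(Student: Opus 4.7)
The plan is to expand $A_1\Gamma_1 = A_1 U_1 - A_1 \Lambda_1$ directly, exploit the fact that $\Delta_1 = I_N^{i_1}\nabla f(\vx_1)/p_{i_1}$ is supported only on block $i_1$, and then show that the gradient-norm terms coming from block-smoothness of $f$ cancel against the terms coming from the inner minimization in $\Lambda_1$. Observe that $A_1 = a_1$, $U_1 = f(\vy_1)$, and
\[
A_1\Lambda_1 = a_1 f(\vx_1) + \min_\vu m_1(\vu) - \littlesum_{i=1}^{n-1}\frac{\sigma_i}{2}\|\vx_*^i - \vx_1^i\|^2,
\]
with $m_1(\vu) = a_1\innp{\Delta_1, \vu - \vx_1} + \sum_{i=1}^{n-1}\frac{\sigma_i}{2}\|\vu^i - \vx_1^i\|^2$.

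Next I would compute $\vv_1$ and $\min_\vu m_1(\vu)$ in closed form. Because $\Delta_1$ vanishes on every block other than $i_1$, the quadratic problem decouples across blocks: for $i \neq i_1$ the minimizer is $\vv_1^i = \vx_1^i$, while for block $i_1$ it is $\vv_1^{i_1} = \vx_1^{i_1} - \frac{a_1}{\sigma_{i_1} p_{i_1}}\nabla_{i_1}f(\vx_1)$, giving $\min_\vu m_1(\vu) = -\frac{a_1^2}{2\sigma_{i_1} p_{i_1}^2}\|\nabla_{i_1}f(\vx_1)\|_*^2$. From the algorithm's update rule $\vy_1 = \vx_1 + \frac{1}{p_{i_1}} I_N^{i_1}(\vv_1 - \vx_1)$, I then read off $\vy_1^{i_1} - \vx_1^{i_1} = -\frac{a_1}{\sigma_{i_1} p_{i_1}^2}\nabla_{i_1}f(\vx_1)$ and $\vy_1^i = \vx_1^i$ for $i\neq i_1$.

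Now I apply the block-smoothness inequality (\ref{eq:smoothness-2nd-order}) with $i = i_1$, yielding
\[
f(\vy_1) - f(\vx_1) \leq -\frac{a_1}{\sigma_{i_1}p_{i_1}^2}\Bigl(1 - \frac{L_{i_1}a_1}{2\sigma_{i_1}p_{i_1}^2}\Bigr)\|\nabla_{i_1} f(\vx_1)\|_*^2.
\]
The hypothesis $a_1 \leq \sigma_i p_i^2/L_i$ for every $i\in\{1,\dots,n-1\}$ (specialized to $i = i_1$) makes the parenthesized factor at least $1/2$, so $a_1(f(\vy_1) - f(\vx_1)) \leq -\frac{a_1^2}{2\sigma_{i_1}p_{i_1}^2}\|\nabla_{i_1} f(\vx_1)\|_*^2$. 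Assembling,
\[
A_1\Gamma_1 \leq -\frac{a_1^2}{2\sigma_{i_1}p_{i_1}^2}\|\nabla_{i_1} f(\vx_1)\|_*^2 + \frac{a_1^2}{2\sigma_{i_1}p_{i_1}^2}\|\nabla_{i_1} f(\vx_1)\|_*^2 + \littlesum_{i=1}^{n-1}\frac{\sigma_i}{2}\|\vx_*^i - \vx_1^i\|^2,
\]
so the two gradient contributions annihilate. The bound is deterministic in the random index $i_1$; taking expectation and using $\|\vx_*^i - \vx_1^i\|^2 \leq \|\vx_* - \vx_1\|^2$ (true coordinatewise in the Euclidean setting) gives $\mathbb{E}[A_1\Gamma_1] \leq \sum_{i=1}^{n-1}\frac{\sigma_i}{2}\|\vx_* - \vx_1\|^2$.

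The only delicate point is bookkeeping the two factors of $p_{i_1}$: one from $\Delta_1 = I_N^{i_1}\nabla f(\vx_1)/p_{i_1}$ in the lower bound, and one from the $\frac{1}{p_{i_1}}$ in the definition of $\vy_1$, whose product is exactly what makes the hypothesis $a_1 \leq \sigma_i p_i^2/L_i$ the calibrated one needed for the gradient-squared terms to cancel. Once that cancellation is in place, the rest is algebra.
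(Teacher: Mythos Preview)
Your argument is essentially correct and follows the same route as the paper: bound $A_1U_1$ via block smoothness, write out $A_1\Lambda_1$ at the minimizer $\vv_1$, and use the relation $\vy_1^{i_1}-\vx_1^{i_1}=\tfrac{1}{p_{i_1}}(\vv_1^{i_1}-\vx_1^{i_1})$ together with the step-size condition $a_1\le \sigma_{i_1}p_{i_1}^2/L_{i_1}$ to kill the remaining quadratic. The one technical slip is that your closed-form expression $\vv_1^{i_1}=\vx_1^{i_1}-\tfrac{a_1}{\sigma_{i_1}p_{i_1}}\nabla_{i_1}f(\vx_1)$ (and hence the explicit formula for $\vy_1^{i_1}-\vx_1^{i_1}$ you plug into smoothness) holds only when $\|\cdot\|$ is Euclidean, whereas the paper allows an arbitrary norm; the paper sidesteps this by keeping $\vv_1$ abstract, observing that the inner-product term $a_1\innp{\nabla_{i_1}f(\vx_1),\vy_1^{i_1}-\vx_1^{i_1}}$ from the smoothness bound equals $a_1\innp{\Delta_1,\vv_1-\vx_1}$ from $\Lambda_1$, and then directly comparing $\tfrac{a_1L_{i_1}}{2p_{i_1}^2}\|\vv_1^{i_1}-\vx_1^{i_1}\|^2$ with $\tfrac{\sigma_{i_1}}{2}\|\vv_1^{i_1}-\vx_1^{i_1}\|^2$, which is norm-agnostic.
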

\begin{proof}
As $a_1 = A_1$ and $\vy_1$ differs from $\vx_1$ only over block $i = i_1$, by smoothness of $f(\cdot)$:
\begin{align*}
&A_1 U_1 = A_1f(\vy_1)\\
&\leq a_1 f(\vx_1) + a_1\innp{\nabla_i f(\vx_1), \vy_1^i - \vx_1^i} + \frac{a_1 L_i}{2}\|\vy_1^i - \vx_1^i\|^2.
\end{align*}
On the other hand, the initial lower bound is:
\begin{align*}
A_1\Lambda_1 =& a_1 (f(\vx_1) + \innp{\Delta_1, \vv_1 - \vx_1}) \\
&+ \littlesum_{i=1}^{n-1} \frac{\sigma_i}{2}\|\vv_1^i - \vx_1^i\|^2
- \littlesum_{i=1}^{n-1} \frac{\sigma_i}{2}\|\vx_*^i - \vx_1^i\|^2.
\end{align*}
Recall that $\vy_1^i = \vx_1^i + \frac{1}{p_i}(\vv_1^i - \vx_1^i)$. Using  $A_1\Gamma_1  = A_1U_1 - A_1\Lambda_1$ and the bounds on $U_1,\, \Lambda_1$ from the above: 
$
A_1\Gamma_1 \leq  \littlesum_{i=1}^{n-1} \frac{\sigma_i}{2}\|\vx_*^i - \vx_1^i\|^2,
$
as $a_1 \leq {p_i}^2\frac{\sigma_i}{L_i}$, and, thus, $\mathbb{E}[A_1\Gamma_1] \leq  \littlesum_{i=1}^{n-1} \frac{\sigma_i}{2}\|\vx_*^i - \vx_1^i\|^2$. 
%
\end{proof}

The next part of the proof is to show that $A_k\Gamma_k$ is a supermartingale. The proof is provided in Appendix~\ref{app:omitted-proofs}.

\begin{lemma}\label{lemma:AkGammak-supermartingale}
If $\frac{{a_k}^2}{A_k}\leq \frac{{p_i}^2\sigma_i}{L_i}$, $\forall i \in\{1, \dots,n-1\}$, then $\mathbb{E}[A_k\Gamma_k|\mathcal{F}_{k-1}] \leq A_{k-1}\Gamma_{k-1}$.
\end{lemma}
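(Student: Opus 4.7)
The plan is to write $A_k\Gamma_k - A_{k-1}\Gamma_{k-1} = (A_kU_k - A_{k-1}U_{k-1}) - (A_k\Lambda_k - A_{k-1}\Lambda_{k-1})$, bound the upper-bound increment from above and the lower-bound increment from below, and then show that after taking conditional expectation in $i_k$ everything non-negative is killed by the hypothesis.

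First I would control $A_k U_k$. Since $\vy_k$ differs from $\vx_k$ only in block $i_k$ and $\vy_k^{i_k}-\vx_k^{i_k} = \frac{a_k}{p_{i_k}A_k}(\vv_k^{i_k}-\vv_{k-1}^{i_k})$, the block-$i_k$ smoothness inequality together with the identity $\langle \Delta_k,\vv_k-\vv_{k-1}\rangle = \frac{1}{p_{i_k}}\langle\nabla_{i_k}f(\vx_k),\vv_k^{i_k}-\vv_{k-1}^{i_k}\rangle$ gives
\begin{equation*}
A_k f(\vy_k) \leq A_k f(\vx_k) + a_k\langle\Delta_k,\vv_k-\vv_{k-1}\rangle + \frac{L_{i_k}a_k^2}{2p_{i_k}^2 A_k}\|\vv_k^{i_k}-\vv_{k-1}^{i_k}\|^2.
\end{equation*}
The critical step is then to replace $A_k f(\vx_k)$ by $A_{k-1}f(\vy_{k-1}) + a_k f(\vv_{k-1})$. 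This follows by applying convexity of $f$ at $\vx_k$ with respect to $\vy_{k-1}$ and $\vv_{k-1}$, weighting by $A_{k-1}$ and $a_k$ and adding: the cross-term collapses to $A_k\langle\nabla f(\vx_k),\vxh_k-\vx_k\rangle$, and this vanishes because $\vxh_k$ and $\vx_k$ differ only over block $n$ where Observation~\ref{obs:grad-ns-zero} gives $\nabla_n f(\vx_k)=\zeros$. This produces the desired bound on $A_kU_k - A_{k-1}U_{k-1}$.

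Next I would bound $A_k\Lambda_k - A_{k-1}\Lambda_{k-1}$ from below. Writing $m_k(\vu) = m_{k-1}(\vu) + a_k\langle\Delta_k,\vu-\vx_k\rangle$ and using that $\vv_{k-1}$ minimizes the block-separable $\sigma_i$-strongly-convex function $m_{k-1}$ gives $m_{k-1}(\vv_k) \geq m_{k-1}(\vv_{k-1}) + \sum_{i=1}^{n-1}\frac{\sigma_i}{2}\|\vv_k^i-\vv_{k-1}^i\|^2$, so
\begin{equation*}
m_k(\vv_k) - m_{k-1}(\vv_{k-1}) \geq a_k\langle\Delta_k,\vv_k-\vx_k\rangle + \littlesum_{i=1}^{n-1}\tfrac{\sigma_i}{2}\|\vv_k^i-\vv_{k-1}^i\|^2,
\end{equation*}
which yields the desired lower bound. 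Subtracting the two, the $a_k\langle\Delta_k,\vv_k\rangle$ terms cancel and we are left with
\begin{equation*}
A_k\Gamma_k - A_{k-1}\Gamma_{k-1} \leq a_k\bigl(f(\vv_{k-1}) - f(\vx_k) + \langle\Delta_k,\vx_k-\vv_{k-1}\rangle\bigr) + \tfrac{L_{i_k}a_k^2}{2p_{i_k}^2 A_k}\|\vv_k^{i_k}-\vv_{k-1}^{i_k}\|^2 - \littlesum_{i=1}^{n-1}\tfrac{\sigma_i}{2}\|\vv_k^i-\vv_{k-1}^i\|^2.
\end{equation*}

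Finally I would take $\mathbb{E}[\cdot\mid\mathcal{F}_{k-1}]$. Note $\vxh_k$ and $\vx_k$ are $\mathcal{F}_{k-1}$-measurable since they do not depend on $i_k$. For the first piece, $\mathbb{E}_{i_k}[\Delta_k] = \sum_{i=1}^{n-1}I_N^i\nabla f(\vx_k) = \nabla f(\vx_k)$, again using $\nabla_n f(\vx_k)=\zeros$, so convexity $f(\vv_{k-1})\geq f(\vx_k) + \langle\nabla f(\vx_k),\vv_{k-1}-\vx_k\rangle$ makes the first bracket non-positive in expectation. For the remaining quadratic terms, observe that $\vv_k - \vv_{k-1}$ is supported on block $i_k$ (because the perturbation $a_k\Delta_k$ only affects block $i_k$ of the separable objective $m_{k-1}$), so $\sum_i\frac{\sigma_i}{2}\|\vv_k^i-\vv_{k-1}^i\|^2 = \frac{\sigma_{i_k}}{2}\|\vv_k^{i_k}-\vv_{k-1}^{i_k}\|^2$, and the net coefficient $\frac{L_{i_k}a_k^2}{2p_{i_k}^2 A_k} - \frac{\sigma_{i_k}}{2}$ is non-positive by the hypothesis $\frac{a_k^2}{A_k}\leq\frac{p_i^2\sigma_i}{L_i}$. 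Combining gives $\mathbb{E}[A_k\Gamma_k\mid\mathcal{F}_{k-1}]\leq A_{k-1}\Gamma_{k-1}$.

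The main obstacle I expect is correctly tracking the two-step chain: first that convexity applied jointly at $\vy_{k-1},\vv_{k-1}$ produces a cross-gradient term which is killed only because the exact block-$n$ minimization forces $\nabla_n f(\vx_k)=\zeros$, and second that the support of $\vv_k-\vv_{k-1}$ is precisely block $i_k$, which is what allows the single per-coordinate stepsize condition $\frac{a_k^2}{A_k}\leq\frac{p_i^2\sigma_i}{L_i}$ to suffice. Everything else is algebraic bookkeeping.
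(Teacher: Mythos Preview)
There is a genuine gap in the final step. After you subtract the two increments you obtain
\[
A_k\Gamma_k - A_{k-1}\Gamma_{k-1} \;\leq\; a_k\bigl(f(\vv_{k-1}) - f(\vx_k) + \langle\Delta_k,\vx_k-\vv_{k-1}\rangle\bigr) + \Big(\tfrac{L_{i_k}a_k^2}{2p_{i_k}^2 A_k} - \tfrac{\sigma_{i_k}}{2}\Big)\|\vv_k^{i_k}-\vv_{k-1}^{i_k}\|^2,
\]
and then claim that taking $\mathbb{E}[\cdot\mid\mathcal{F}_{k-1}]$ and using convexity makes the bracket non-positive. But $\mathbb{E}_{i_k}[\Delta_k]=\nabla f(\vx_k)$ turns the bracket into the Bregman divergence
\[
f(\vv_{k-1}) - f(\vx_k) - \langle\nabla f(\vx_k),\vv_{k-1}-\vx_k\rangle \;=\; D_f(\vv_{k-1},\vx_k)\;\geq\;0,
\]
so convexity gives the wrong sign. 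The source of the problem is your ``critical step'': when you replace $A_k f(\vx_k)$ by $A_{k-1}f(\vy_{k-1}) + a_k f(\vv_{k-1})$, you have already spent the $a_k$-weighted convexity inequality at $\vv_{k-1}$, and exactly this quantity reappears (with the same sign) after combining with the lower bound. You cannot use it a second time to kill it.

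The paper's argument avoids this by \emph{not} linearizing at $\vv_{k-1}$ in the upper-bound step. It uses convexity only at $\vy_{k-1}$, writing $A_{k-1}(f(\vx_k)-f(\vy_{k-1}))\leq A_{k-1}\langle\nabla f(\vx_k),\vx_k-\vy_{k-1}\rangle$ and keeping this linear term. After combining with the lower bound and taking conditional expectation, the surviving cross term is
\[
\langle\nabla f(\vx_k),\,A_{k-1}(\vx_k-\vy_{k-1}) + a_k(\vx_k-\vv_{k-1})\rangle \;=\; A_k\langle\nabla f(\vx_k),\vx_k-\vxh_k\rangle \;=\;0,
\]
since $\vx_k-\vxh_k$ is supported on block $n$ and $\nabla_n f(\vx_k)=\zeros$. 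So the coupling relation $A_k\vxh_k = A_{k-1}\vy_{k-1}+a_k\vv_{k-1}$ is used to make the first-order terms cancel \emph{exactly}, rather than to produce an inequality. Your treatment of the quadratic terms and of the support of $\vv_k-\vv_{k-1}$ is correct; only the handling of the first-order part needs to be replaced by this exact cancellation.
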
 

Finally, we bound the convergence of~(\ref{eq:AAR-BCD}).
\begin{theorem}\label{thm:AAR-BCD-convergence}
Let $\vx_k,\, \vy_k$ evolve according to (\ref{eq:AAR-BCD}), for $\frac{{a_k}^2}{A_k} = \min_{1\leq i\leq n-1}\frac{\sigma_i{p_i}^2}{L_i} = \mathrm{const}$. Then, $\forall k \geq 1$:
$$
\mathbb{E}[f(\vy_k)] - f(\vx_*) \leq \frac{\sum_{i=1}^{n-1} \sigma_i \|\vx_*^i - \vx_a^i\|^2}{2A_k}.
$$
In particular, if $p_i = \frac{\sqrt{L_i}}{\sum_{i'=1}^{n-1}\sqrt{L_{i'}}}$, $\sigma_i = (\sum_{i'=1}^{n-1}\sqrt{L_{i'}})^2$, and $a_1 = 1$, then:
$$
\mathbb{E}[f(\vy_k)] - f(\vx_*) \leq \frac{2(\sum_{i'=1}^{n-1}\sqrt{L_{i'}})^2 \sum_{i=1}^{n-1}\|\vx_*^i - \vx_1^i\|^2}{k(k+3)}.
$$
Alternatively, if $p_i = \frac{1}{n-1}$, $\sigma_i = L_i$, and $a_1 = \frac{1}{(n-1)^2}$:
$$
\mathbb{E}[f(\vy_k)] - f(\vx_*) \leq \frac{2(n-1)^2\sum_{i=1}^{n-1}L_i \|\vx_*^i - \vx_1^i\|^2}{k(k+3)}.
$$
\end{theorem}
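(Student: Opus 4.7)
The strategy is to stitch together the three ingredients already established for (\ref{eq:AAR-BCD}): Proposition~\ref{prop:AAR-BCD-init-gap} bounds the initial scaled gap, Lemma~\ref{lemma:AkGammak-supermartingale} shows $A_k\Gamma_k$ is a supermartingale under the step-size condition, and Lemma~\ref{lemma:Lambda_k-is-lb} gives $\mathbb{E}[\Lambda_k] \leq f(\vx_*)$. The hypothesis $a_k^2/A_k = \min_{i}\sigma_i p_i^2/L_i$ is precisely the largest constant for which the assumptions of both Proposition~\ref{prop:AAR-BCD-init-gap} and Lemma~\ref{lemma:AkGammak-supermartingale} hold uniformly in $k$ and $i$, so the three results apply with no further work.

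First I take unconditional expectations of the supermartingale inequality and telescope over $k$ steps to obtain $\mathbb{E}[A_k\Gamma_k] \leq \mathbb{E}[A_1\Gamma_1] \leq \sum_{i=1}^{n-1}\tfrac{\sigma_i}{2}\|\vx_*^i-\vx_1^i\|^2$. Writing $\Gamma_k = U_k - \Lambda_k = f(\vy_k) - \Lambda_k$ and combining with $\mathbb{E}[\Lambda_k] \leq f(\vx_*)$ yields $A_k\bigl(\mathbb{E}[f(\vy_k)] - f(\vx_*)\bigr) \leq \mathbb{E}[A_k\Gamma_k]$, and dividing by $A_k$ produces the generic first bound of the theorem (with the evident understanding that $\vx_a$ there is $\vx_1$).

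The quantitative rates then reduce to analyzing the growth of $A_k$ under the recurrence $a_k^2/A_k = c$ combined with $A_k = A_{k-1}+a_k$. I would verify by direct substitution that the explicit choice $a_k = c(k+1)/2$, $A_k = c\,k(k+3)/4$ is admissible: it satisfies $A_1 = a_1 = c$, matches the initialization convention $A_1 = a_1$ required by Proposition~\ref{prop:AAR-BCD-init-gap}, and gives $a_k^2/A_k = c(k+1)^2/(k(k+3)) \leq c$ for all $k \geq 1$ (which is enough since the hypotheses of Proposition~\ref{prop:AAR-BCD-init-gap} and Lemma~\ref{lemma:AkGammak-supermartingale} are inequalities). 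This delivers $1/A_k \leq 4/(c\,k(k+3))$.

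Finally I plug in each specific parameter choice. For $p_i = \sqrt{L_i}/\sum_{i'}\sqrt{L_{i'}}$ and $\sigma_i = (\sum_{i'}\sqrt{L_{i'}})^2$, a one-line computation shows $\sigma_i p_i^2/L_i = 1$ for every $i$, hence $c = 1$ and $a_1 = 1$, which matches the stated first rate. For $p_i = 1/(n-1)$ and $\sigma_i = L_i$, one gets $\sigma_i p_i^2/L_i = 1/(n-1)^2$ for every $i$, hence $c = 1/(n-1)^2$ and $a_1 = 1/(n-1)^2$, which matches the stated second rate after substitution. The main obstacle is the recurrence analysis in the previous paragraph: everything substantive about acceleration has already been absorbed into Lemma~\ref{lemma:AkGammak-supermartingale}, and what remains is the standard but delicate verification that $A_k$ grows quadratically in $k$ while respecting the step-size constraint for all $i$ simultaneously.
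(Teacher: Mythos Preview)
Your proposal is correct and matches the paper's approach: assemble Proposition~\ref{prop:AAR-BCD-init-gap}, Lemma~\ref{lemma:AkGammak-supermartingale}, and Lemma~\ref{lemma:Lambda_k-is-lb} for the generic bound, then control the growth of $A_k$ for the explicit rates. The only nuance is that the theorem fixes the sequence via the \emph{equality} $a_k^2/A_k=c$, so the paper phrases the recurrence step as ``$a_j$ grows faster than $c(j+1)/2$'' (hence $A_k\geq ck(k+3)/4$) rather than selecting your explicit sequence with $a_k^2/A_k\leq c$; the two arguments are equivalent and yield the same final bounds.
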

\begin{proof}
The first part of the proof follows immediately by applying Proposition~\ref{prop:AAR-BCD-init-gap} and Lemma~\ref{lemma:AkGammak-supermartingale}. The second part follows by plugging in the particular choice of parameters and observing that $a_j$ grows faster than $\frac{j+1}{2}$ in the former, and faster than $\frac{j+1}{2(n-1)^2}$ in the latter case.
%
%
\end{proof}

Finally, we make a few remarks regarding Theorem~\ref{thm:AAR-BCD-convergence}. In the setting without a non-smooth block (when $n^{\mathrm{th}}$ block is empty), (\ref{eq:AAR-BCD}) with sampling probabilities $p_i \sim \sqrt{L_i}$ has the same convergence bound as the NU\_ACDM algorithm~\cite{allen2016even} and the ALPHA algorithm for smooth minimization~\cite{qu2016coordinate}. Further, when the sampling probabilities are uniform, (\ref{eq:AAR-BCD}) converges at the same rate as the ACDM algorithm \cite{nesterov2012efficiency} and the APCG algorithm applied to non-composite functions \cite{lin2014accelerated}.

\subsection{Accelerated Alternating Minimization}\label{sec:acc-alt-min}

Before making specific remarks about accelerated alternating minimization (case $n = 2$), we first note that the convergence analysis of~\ref{eq:AAR-BCD} applies generically even if the step $\vy_k$ is replaced by exact minimization over block $i_k$ (namely, if, instead of the current of choice of $\vy_k$ in~\ref{eq:AAR-BCD} we set $\vy_k = \argmin_{\vx \in S_{i_k}(\vx_k)}f(\vx)$). This change is relevant only at the beginning of the proof of Lemma~\ref{lemma:AkGammak-supermartingale}, and, to see that the same analysis still applies, observe that:
$$
f(\vy_k) = \min_{\vx \in S_{i_k}(\vx_k)}f(\vx) \leq f(\vx_k + \frac{a_k}{p_{i_k}A_k}I_N^{i_k}(\vv_{k}-\vv_{k-1})).
$$
That is, replacing a particular step over block $i_k$ with the exact minimization over the same block can only reduce the function value, which can only make the upper bound $U_k$ (and, thus, the gap $\Gamma_k$) lower.

Accelerated alternating minimization with exact minimization over one block is immediately obtained from~\ref{eq:AAR-BCD} as a special case when $n = 2.$ To obtain a version of the method with exact minimization over both blocks, it simply suffices to replace the $\vy_k$ step with the exact minimization over the first block, and, as already discussed, the same analysis applies. 

To obtain a method that is symmetric over blocks 1 and 2, one only needs to replace the roles of blocks 1 and 2 in, say, even iterations. Again, the same analysis applies, except that in even iterations one would need to have $\frac{{a_k}^2}{A_k} \leq \frac{\sigma_2}{L_2},$ whereas in odd iterations it would still be $\frac{{a_k}^2}{A_k} \leq \frac{\sigma_1}{L_1};$\footnote{Observe that when $n = 2$, $\{p_{i}\}$ is supported on a single element -- block 1 is selected deterministically in even iterations; block 2 is selected deterministically in odd iterations.} this affects the convergence bound in Theorem~\ref{thm:AAR-BCD-convergence} by at most a factor of 2. Observe further that when $L_2 \rightarrow \infty,$ we have $a_k \rightarrow 0$ in even iterations, and the method reduces to the non-symmetric version obtained directly from~\ref{eq:AAR-BCD}. 

Finally, in the case of two blocks ($n = 2$), it is straightforward to obtain a parameter-free version of the method. Indeed, all that is needed for the proof is that $A_k \Gamma_k \leq A_{k-1}\Gamma_{k-1}$ (as in Lemma~\ref{lemma:AkGammak-supermartingale}. As discussed before, when $n = 2,$ there is no randomness in the algorithm. Suppose that we want to implement a parameter-free version of the symmetric method and consider odd iterations (which are obtained as special cases of~\ref{eq:AAR-BCD} for $n=2$, with or without the exact minimization in the $\vy_k$-step). Then, from Lemma~\ref{lemma:AkGammak-supermartingale}, all that needs to be satisfied is that $\frac{{a_k}^2}{A_k}\leq \frac{\sigma_1}{L_1}.$ Hence, if we find $a_k^*$ for which  $A_k \Gamma_k \leq A_{k-1}\Gamma_{k-1},$ then $A_k \Gamma_k \leq A_{k-1}\Gamma_{k-1}$ for any $a_k \leq a_k^*.$ This is sufficient for implementing a backtracking line search over $a_k$ to ensure $A_k \Gamma_k \leq A_{k-1}\Gamma_{k-1}.$ To do so, one can use:
\begin{align*}
    A_k \Gamma_k -& A_{k-1}\Gamma_{k-1}\\
    =& A_k f(\vy_k) - A_{k-1} f(\vy_{k-1})-a_k f(\vx_k)\\
    &-  a_k\innp{\nabla_1 f(\vx_k), \vv_{k}^1 - \vx_k^1}
    -  \frac{\sigma_{1}}{2}\|\vv_k^{1} - \vv_{k-1}^{1}\|^2,
\end{align*}
where we have used $U_k = f(\vy_k)$ (by the definition of $U_k$) and the equivalent expression for $A_k \Lambda_k - A_{k-1}\Lambda_{k-1}$ from  Eq.~\eqref{eq:lb-change-aarbcd}. As a practical matter, in even iterations, if $L_2$ is very large and potentially approaching $\infty,$ one can halt the backtracking line search and set $a_k = 0$ as soon as the search reaches some preset ``sufficiently small'' value of $a_k.$
\section{Numerical Experiments}\label{sec:experiments}

\begin{figure*}[ht!]
\centering
\hspace{.3cm}
\subfigure[$N/n = 5$]{\includegraphics[width=0.22\textwidth]{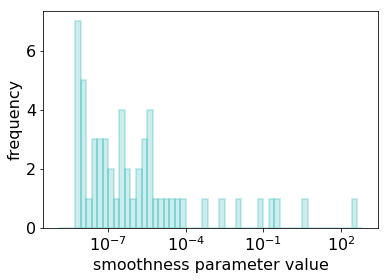}\label{fig:smooth-5-blog}}\hspace{.3cm}
\subfigure[$N/n = 10$]{\includegraphics[width=0.22\textwidth]{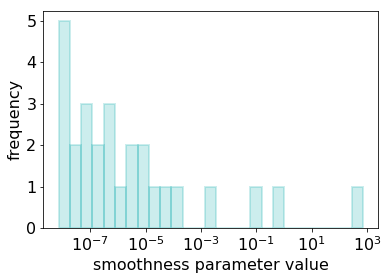}\label{fig:smooth-10-blog}}\hspace{.3cm}
\subfigure[$N/n = 20$]{\includegraphics[width=0.22\textwidth]{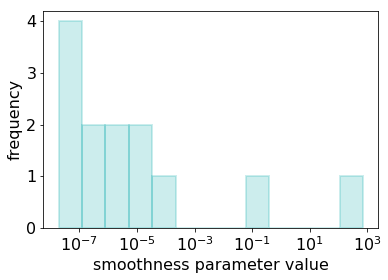}\label{fig:smooth-20-blog}}\hspace{.3cm}
\subfigure[$N/n = 40$]{\includegraphics[width=0.22\textwidth]{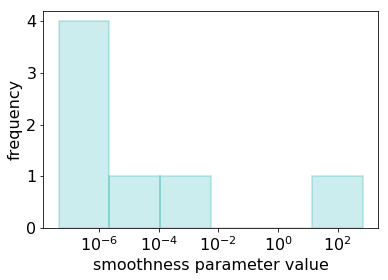}\label{fig:smooth-40-blog}}\hspace{-.1cm}
 \subfigure[$N/n = 5$]{\includegraphics[width=0.24\textwidth]{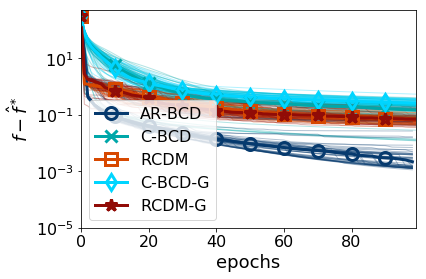}\label{fig:lip-5-blog}}
 \subfigure[$N/n=10$]{\includegraphics[width=0.24\textwidth]{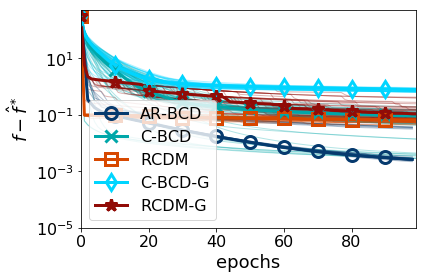}\label{fig:lip-10-blog}}
 \subfigure[$N/n=20$]{\includegraphics[width=0.24\textwidth]{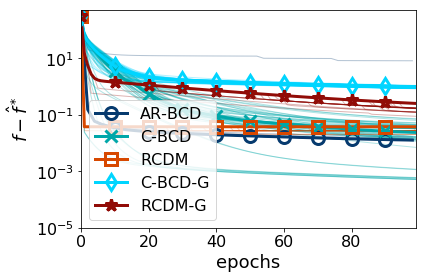}\label{fig:lip-med-20}}
 \subfigure[$N/n=40$]{\includegraphics[width=0.24\textwidth]{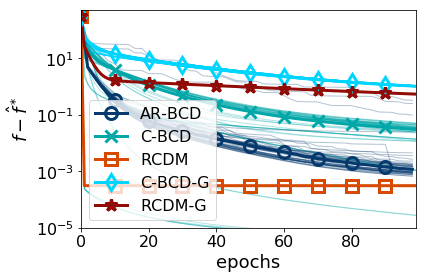}\label{fig:lip-med-40}}
\subfigure[$N/n = 5$]{\includegraphics[width=0.24\textwidth]{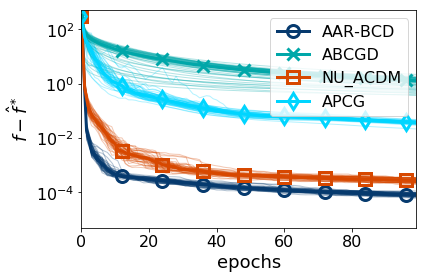}\label{fig:acc-lip-5-blog}}
 \subfigure[$N/n=10$]{\includegraphics[width=0.24\textwidth]{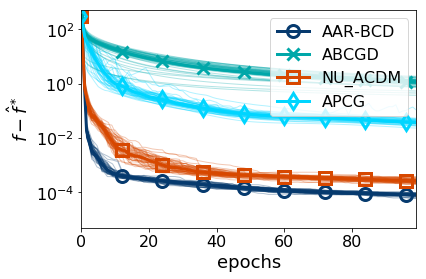}\label{fig:acc-lip-10-blog}}
 \subfigure[$N/n=20$]{\includegraphics[width=0.24\textwidth]{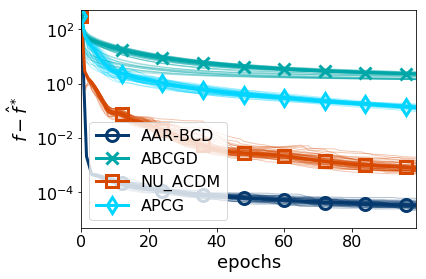}\label{fig:acc-lip-med-20}}
 \subfigure[$N/n=40$]{\includegraphics[width=0.24\textwidth]{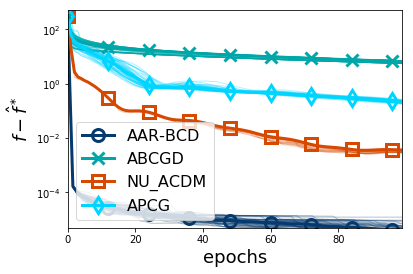}\label{fig:acc-lip-med-40}}
\caption{Comparison of different block coordinate descent methods: \protect\subref{fig:smooth-5-blog}-\protect\subref{fig:smooth-40-blog} distribution of smoothness parameters over blocks, \protect\subref{fig:lip-5-blog}-\protect\subref{fig:lip-med-40} comparison of non-accelerated methods, and \protect\subref{fig:acc-lip-5-blog}-\protect\subref{fig:acc-lip-med-40} comparison of accelerated methods. Block sizes $N/n$ increase going left to right.}
\label{fig:AR-BCD}
\end{figure*}

To illustrate the results, we solve the least squares problem on the BlogFeedback Data Set~\cite{buza2014feedback} obtained from UCI Machine Learning Repository~\cite{Lichman:2013}. The data set contains 280 attributes and 52,396 data points. The attributes correspond to various metrics of crawled blog posts. The data is labeled, and the labels correspond to the number of comments that were posted within 24 hours from a fixed basetime. The goal of a regression method is to predict the number of comments that a blog post receives. 

What makes linear regression with least squares on this dataset particularly suitable to our setting is that the smoothness parameters of individual coordinates in the least squares problem take values from a large interval, even when the data matrix $\mathbf{A}$ is scaled by its maximum absolute value (the values are between 0 and $\sim$354).\footnote{We did not compare \ref{eq:AR-BCD} and \ref{eq:AAR-BCD} to other methods on problems with a non-smooth block ($L_n = \infty$), as no other methods have any known theoretical guarantees in such a setting.} The minimum eigenvalue of $\mathbf{A}^T\mathbf{A}$ is zero (i.e., $\mathbf{A}^T\mathbf{A}$ is not a full-rank matrix), and thus the problem is not strongly convex. 

We partition the data into blocks as follows. We first sort the coordinates by their individual smoothness parameters. Then, we group the first $N/n$ coordinates (from the sorted list of coordinates) into the first block, the second $N/n$ coordinates into the second block, and so on. The chosen block sizes $N/n$ are 5, 10, 20, 40, corresponding to $n=\{56, 28, 14, 7\}$ coordinate blocks, respectively.

The distribution of the smoothness parameters over blocks, for all chosen block sizes, is shown in Fig.~\ref{fig:smooth-5-blog}-\ref{fig:smooth-40-blog}. Observe that as the block size increases (going from left to right in Fig.~\ref{fig:smooth-5-blog}-\ref{fig:smooth-40-blog}), the discrepancy between the two largest smoothness parameters increases. 

In all the comparisons between the different methods, we define an epoch to be equal to $n$ iterations (this would correspond to a single iteration of a full-gradient method). The graphs plot the optimality gap of the methods over epochs, where the optimal objective value $f^*$ is estimated via a higher precision method and denoted by $\hat{f}^*$. All the results are shown for 50 method repetitions, with bold lines  representing the median\footnote{We choose to show the median as opposed to the mean, as it is well-known that in the presence of outliers the median is a robust estimator of the true mean~\cite{hampel2011robust}.} optimality gap over those 50 runs. The norm used in all the experiments is $\ell_2$, i.e., $\|\cdot\| = \|\cdot\|_2$.

\paragraph{Non-accelerated methods} We first compare \ref{eq:AR-BCD} with a gradient step to \ref{eq:standard-BCD}~\cite{nesterov2012efficiency} and standard cyclic BCD -- C-BCD (see, e.g.,~\cite{beck2013convergence-BCD}). To make the comparison fair, as \ref{eq:AR-BCD} makes two steps per iteration, we slow it down by a factor of two compared to the other methods (i.e., we count one iteration of \ref{eq:AR-BCD} as two). In the comparison, we consider two cases for RCDM and C-BCD: (i) the case in which these two algorithms perform gradient steps on the first $n-1$ blocks and exact minimization on the $n^{\mathrm{th}}$ block (denoted by RCDM and C-BCD in the figure), and (ii) the case in which the algorithms perform gradient steps on all blocks (denoted by RCDM-G and C-BCD-G in the figure). The sampling probabilities for RCDM and AR-BCD are proportional to the block smoothness parameters. The permutation for C-BCD is random, but fixed in each method run.

Fig.~\ref{fig:lip-5-blog}-\ref{fig:lip-med-40} shows the comparison of the described non-accelerated algorithms, for block sizes $N/n \in\{5, 10, 20, 40\}$. The first observation to make is that adding exact minimization over the least smooth block speeds up the convergence of both C-BCD and RCDM, suggesting that the existing analysis of these two methods is not tight. Second, \ref{eq:AR-BCD} generally converges to a lower optimality gap. While RCDM makes a large initial progress, it stagnates afterwards due to the highly non-uniform sampling probabilities, whereas \ref{eq:AR-BCD} keeps making progress.

\paragraph{Accelerated methods} 
Finally, we compare \ref{eq:AAR-BCD} to NU\_ACDM~\cite{allen2016even}, APCG~\cite{lin2014accelerated}, and accelerated C-BCD (ABCGD) from~\cite{beck2013convergence-BCD}. As \ref{eq:AAR-BCD} makes three steps per iteration (as opposed to two steps normally taken by other methods), we slow it down by a factor 1.5 (i.e., we count one iteration of \ref{eq:AAR-BCD} as 1.5). We chose the sampling probabilities of NU\_ACDM and \ref{eq:AAR-BCD} to be proportional to $\sqrt{L_i}$, while the sampling probabilities for APCG are uniform\footnote{The theoretical results for APCG were only presented for uniform sampling~\cite{lin2014accelerated}.}. Similar as before, each full run of ABCGD is performed on a random but fixed permutation of the blocks.

The results are shown in Fig.~\ref{fig:acc-lip-5-blog}-\ref{fig:acc-lip-med-40}. Compared to APCG (and ABCGD), NU\_ACDM and \ref{eq:AAR-BCD} converge much faster, which is expected, as the distribution of the smoothness parameters is highly non-uniform and the meethods with non-uniform sampling are theoretically faster by factor of the order $\sqrt{n}$~\cite{allen2016even}. As the block size is increased (going left to right), the discrepancy between the smoothness parameters of the least smooth block and the remaining blocks increases, and, as expected, \ref{eq:AAR-BCD} exhibits more dramatic improvements compared to the other methods. 

\section{Conclusion}\label{sec:conclusion}

We presented a novel block coordinate descent algorithm \ref{eq:AR-BCD} and its accelerated version for smooth minimization \ref{eq:AAR-BCD}. Our work answers the open question of~\cite{beck2013convergence-BCD} whether the convergence of block coordinate descent methods intrinsically depends on the largest smoothness parameter over all the blocks by showing that such a dependence is not necessary, as long as exact minimization over the least smooth block is possible. Before our work, such a result only existed for the setting of two blocks, using the alternating minimization method.

There are several research directions that merit further investigation. For example, we observed empirically that exact optimization over the non-smooth block improves the performance of RCDM and C-BCD, which is not justified by the existing analytical bounds. We expect that in both of these methods the dependence on the least smooth block can be removed, possibly at the cost of a worse dependence on the number of blocks. Further, \ref{eq:AR-BCD} and \ref{eq:AAR-BCD} are mainly useful when the discrepancy between the largest block smoothness parameter and the remaining smoothness parameters is large, while under uniform distribution of the smoothness parameters it can be slower than other methods by a factor 1.5-2. It is an  interesting question whether there are modifications to \ref{eq:AR-BCD} and \ref{eq:AAR-BCD} that would make them uniformly better than the alternatives.

\section{Acknowledgements}
Part of this work was done while the authors were visiting the Simons Institute for the Theory of Computing. It was partially supported by NSF grant \#CCF-1718342 and by the DIMACS/Simons Collaboration on Bridging Continuous and Discrete Optimization through NSF grant \#CCF-1740425.
\balance
\bibliography{references_BCD}

\begin{thebibliography}{26}
\providecommand{\natexlab}[1]{#1}
\providecommand{\url}[1]{\texttt{#1}}
\expandafter\ifx\csname urlstyle\endcsname\relax
  \providecommand{\doi}[1]{doi: #1}\else
  \providecommand{\doi}{doi: \begingroup \urlstyle{rm}\Url}\fi

\bibitem[Allen-Zhu et~al.(2016)Allen-Zhu, Qu, Richt{\'a}rik, and
  Yuan]{allen2016even}
Allen-Zhu, Zeyuan, Qu, Zheng, Richt{\'a}rik, Peter, and Yuan, Yang.
\newblock Even faster accelerated coordinate descent using non-uniform
  sampling.
\newblock In \emph{Proc. ICML'16}, 2016.

\bibitem[Beck(2015)]{beck2015convergence-AM}
Beck, Amir.
\newblock On the convergence of alternating minimization for convex programming
  with applications to iteratively reweighted least squares and decomposition
  schemes.
\newblock \emph{SIAM J. Optimiz.}, 25\penalty0 (1):\penalty0 185--209, 2015.

\bibitem[Beck \& Tetruashvili(2013)Beck and
  Tetruashvili]{beck2013convergence-BCD}
Beck, Amir and Tetruashvili, Luba.
\newblock On the convergence of block coordinate descent type methods.
\newblock \emph{SIAM J. Optimiz.}, 23\penalty0 (4):\penalty0 2037--2060, 2013.

\bibitem[Bertsekas(1999)]{bertsekas1999nonlinear}
Bertsekas, Dimitri~P.
\newblock \emph{Nonlinear programming}.
\newblock Athena scientific Belmont, 1999.

\bibitem[Boyd \& Vandenberghe(2004)Boyd and Vandenberghe]{boyd2004convex}
Boyd, Stephen and Vandenberghe, Lieven.
\newblock \emph{Convex optimization}.
\newblock Cambridge university press, 2004.

\bibitem[Bubeck(2014)]{Bube2014}
Bubeck, S{\'{e}}bastien.
\newblock \emph{Theory of Convex Optimization for Machine Learning}.
\newblock 2014.
\newblock arXiv preprint, arXiv:1405.4980v1.

\bibitem[Buza(2014)]{buza2014feedback}
Buza, Krisztian.
\newblock Feedback prediction for blogs.
\newblock In \emph{Data analysis, machine learning and knowledge discovery},
  pp.\  145--152. Springer, 2014.

\bibitem[Diakonikolas \& Orecchia(2017)Diakonikolas and
  Orecchia]{thegaptechnique}
Diakonikolas, Jelena and Orecchia, Lorenzo.
\newblock The approximate duality gap technique: A unified theory of
  first-order methods, 2017.
\newblock arXiv preprint, arXiv:1712.02485.

\bibitem[Fercoq \& Richt{\'a}rik(2015)Fercoq and
  Richt{\'a}rik]{fercoq2015accelerated}
Fercoq, Olivier and Richt{\'a}rik, Peter.
\newblock Accelerated, parallel, and proximal coordinate descent.
\newblock \emph{SIAM J. Optimiz.}, 25\penalty0 (4):\penalty0 1997--2023, 2015.

\bibitem[Gower \& Richt{\'a}rik(2015)Gower and Richt{\'a}rik]{GowerR15}
Gower, Robert~Mansel and Richt{\'a}rik, Peter.
\newblock Stochastic dual ascent for solving linear systems.
\newblock \emph{arXiv preprint arXiv:1512.06890}, 2015.

\bibitem[Guminov et~al.(2019)Guminov, Dvurechensky, and
  Gasnikov]{guminov2019accelerated}
Guminov, Sergey, Dvurechensky, Pavel, and Gasnikov, Alexander.
\newblock Accelerated alternating minimization.
\newblock \emph{arXiv preprint arXiv:1906.03622}, 2019.

\bibitem[Hampel et~al.(2011)Hampel, Ronchetti, Rousseeuw, and
  Stahel]{hampel2011robust}
Hampel, Frank~R, Ronchetti, Elvezio~M, Rousseeuw, Peter~J, and Stahel,
  Werner~A.
\newblock \emph{Robust statistics: the approach based on influence functions},
  volume 196.
\newblock John Wiley \& Sons, 2011.

\bibitem[Lee \& Sidford(2013)Lee and Sidford]{lee2013efficient}
Lee, Yin~Tat and Sidford, Aaron.
\newblock Efficient accelerated coordinate descent methods and faster
  algorithms for solving linear systems.
\newblock In \emph{Proc. IEEE FOCS'13}, 2013.

\bibitem[Lichman(2013)]{Lichman:2013}
Lichman, M.
\newblock {UCI} machine learning repository, 2013.
\newblock URL \url{http://archive.ics.uci.edu/ml}.

\bibitem[Lin et~al.(2014)Lin, Lu, and Xiao]{lin2014accelerated}
Lin, Qihang, Lu, Zhaosong, and Xiao, Lin.
\newblock An accelerated proximal coordinate gradient method.
\newblock In \emph{Proc. NIPS'14}, 2014.

\bibitem[Nesterov(2012)]{nesterov2012efficiency}
Nesterov, Yu.
\newblock Efficiency of coordinate descent methods on huge-scale optimization
  problems.
\newblock \emph{SIAM J. Optimiz.}, 22\penalty0 (2):\penalty0 341--362, 2012.

\bibitem[Nesterov \& Stich(2017)Nesterov and
  Stich]{nesterov-Stich2017efficiency}
Nesterov, Yurii and Stich, Sebastian~U.
\newblock Efficiency of the accelerated coordinate descent method on structured
  optimization problems.
\newblock \emph{SIAM J. Optimiz.}, 27\penalty0 (1):\penalty0 110--123, 2017.

\bibitem[Ortega \& Rheinboldt(1970)Ortega and Rheinboldt]{ortega1970iterative}
Ortega, James~M and Rheinboldt, Werner~C.
\newblock \emph{Iterative solution of nonlinear equations in several
  variables}, volume~30.
\newblock SIAM, 1970.

\bibitem[Qu \& Richt{\'a}rik(2016)Qu and Richt{\'a}rik]{qu2016coordinate}
Qu, Zheng and Richt{\'a}rik, Peter.
\newblock Coordinate descent with arbitrary sampling i: Algorithms and
  complexity.
\newblock \emph{Optimization Methods and Software}, 31\penalty0 (5):\penalty0
  829--857, 2016.

\bibitem[Qu et~al.(2016)Qu, Richt\'{a}rik, Tak\'{a}\v{c}, and
  Fercoq]{qu2016sdna}
Qu, Zheng, Richt\'{a}rik, Peter, Tak\'{a}\v{c}, Martin, and Fercoq, Olivier.
\newblock {SDNA}: Stochastic dual {N}ewton ascent for empirical risk
  minimization.
\newblock In \emph{Proc. ICML'16}, 2016.

\bibitem[Richt{\'a}rik \& Tak{\'a}{\v{c}}(2014)Richt{\'a}rik and
  Tak{\'a}{\v{c}}]{richtarik2014iteration}
Richt{\'a}rik, Peter and Tak{\'a}{\v{c}}, Martin.
\newblock Iteration complexity of randomized block-coordinate descent methods
  for minimizing a composite function.
\newblock \emph{Math. Prog.}, 144\penalty0 (1-2):\penalty0 1--38, 2014.

\bibitem[Saha \& Tewari(2013)Saha and Tewari]{saha2013nonasymptotic}
Saha, Ankan and Tewari, Ambuj.
\newblock On the nonasymptotic convergence of cyclic coordinate descent
  methods.
\newblock \emph{SIAM J. Optimiz.}, 23\penalty0 (1):\penalty0 576--601, 2013.

\bibitem[Strohmer \& Vershynin(2009)Strohmer and Vershynin]{SV}
Strohmer, Thomas and Vershynin, Roman.
\newblock {A Randomized Kaczmarz Algorithm with Exponential Convergence}.
\newblock \emph{J. Fourier Anal. Appl.}, 15\penalty0 (2):\penalty0 262--278,
  2009.

\bibitem[Sun \& Hong(2015)Sun and Hong]{sun2015improved}
Sun, Ruoyu and Hong, Mingyi.
\newblock Improved iteration complexity bounds of cyclic block coordinate
  descent for convex problems.
\newblock In \emph{Proc. NIPS'15}, 2015.

\bibitem[Tseng \& Yun(2009)Tseng and Yun]{tseng2009coordinate}
Tseng, Paul and Yun, Sangwoon.
\newblock A coordinate gradient descent method for nonsmooth separable
  minimization.
\newblock \emph{Math. Prog.}, 117\penalty0 (1-2):\penalty0 387--423, 2009.

\bibitem[Wright(2015)]{wright2015coordinate}
Wright, Stephen~J.
\newblock Coordinate descent algorithms.
\newblock \emph{Math. Prog.}, 151\penalty0 (1):\penalty0 3--34, 2015.

\end{thebibliography}
\bibliographystyle{icml2018}
\appendix
\onecolumn
\section{Omitted Proofs from Section~\ref{sec:acc-algo}}\label{app:omitted-proofs}

\begin{proof}[Proof of Proposition~\ref{prop:expect-Delta_k}]
Let $\mathcal{F}_{k-1}$ be the natural filtration up to iteration $k-1$. Observe that, as $\nabla_n f(\vx_k) = \zeros$:
\begin{equation}\label{eq:expectation-of-Delta_k}
\mathbb{E}[\Delta_k|\mathcal{F}_{k-1}] = \nabla f(\vx_k).
\end{equation}

Since $\vx_1$ is deterministic (fixed initial point) and the only random variable $\Delta_1$  depends on is $i_1$, we have:
\begin{equation}\label{eq:initial-Delta}
\begin{aligned}
\mathbb{E}[a_1\innp{\Delta_1, \vx_* - \vx_1}] &= a_1\innp{\nabla f(\vx_1), \vx_* - \vx_1}\\
&= \mathbb{E}[a_1\innp{\nabla f(\vx_1), \vx_* - \vx_1}].
\end{aligned}
\end{equation}
Let $k > 1$. Observe that $a_j\innp{\Delta_j, \vx_* - \vx_j}$ is measurable with respect to $\mathcal{F}_{k-1}$ for $j \leq k-1$. By linearity of expectation, using (\ref{eq:expectation-of-Delta_k}):
\begin{align}
\mathbb{E}[\littlesum_{j=1}^k a_j \innp{\Delta_j, \vx_* - \vx_j}|\mathcal{F}_{k-1}]
 = a_k \innp{\nabla f(\vx_k), \vx_* - \vx_k} + \littlesum_{j=1}^{k-1}a_j  \innp{\Delta_j, \vx_* - \vx_j}.\notag
\end{align}
Taking expectations on both sides of the last equality gives a recursion on $\mathbb{E}[\littlesum_{j=1}^k a_j \innp{\Delta_j, \vx_* - \vx_j}]$, which, combined with (\ref{eq:initial-Delta}), completes the proof.
\end{proof}

\begin{proof}[Proof of Lemma~\ref{lemma:AkGammak-supermartingale}]
As $A_{k-1}\Gamma_{k-1}$ is measurable with respect to the natural filtration $\mathcal{F}_{k-1}$, $\mathbb{E}[A_k\Gamma_k|\mathcal{F}_{k-1}] \leq A_{k-1}\Gamma_{k-1}$ is equivalent to $\mathbb{E}[A_k\Gamma_k - A_{k-1}\Gamma_{k-1}|\mathcal{F}_{k-1}] \leq 0$. 

The change in the upper bound is:
\begin{align*}
 A_k U_k - A_{k-1}U_{k-1}
= A_{k}(f(\vy_k) - f(\vx_k)) + A_{k-1}(f(\vx_k) - f(\vy_{k-1}))+ a_k f(\vx_k).
\end{align*}
By convexity, $f(\vx_k) - f(\vy_{k-1})\leq \innp{\nabla f(\vx_k), \vx_k - \vy_{k-1}}$. Further, as $\vy_k = \vx_k + I_N^{i_k}\frac{a_k}{p_{i_k}A_k}(\vv_k - \vv_{k-1})$, we have, by smoothness of $f(\cdot)$, that $f(\vy_k) - f(\vx_k) \leq \innp{\nabla f(\vx_k), I_N^{i_k}\frac{a_k}{p_{i_k}A_k}(\vv_k - \vv_{k-1})} + \frac{L_{i_k}{a_k}^2}{2{p_{i_k}}^2 {A_k}^2}\|\vv_k^{i_k} - \vv_{k-1}^{i_k}\|^2$. Hence:
\begin{equation}\label{eq:ub-change-aarbcd}
\begin{aligned}
& A_k U_k - A_{k-1}U_{k-1} \\
&\hspace{1cm}\leq a_k f(\vx_k ) + \innp{\nabla f(\vx_k), A_{k-1}(\vx_k - \vy_{k-1}) + I_N^{i_k}\frac{a_k}{p_{i_k}}(\vv_k - \vv_{k-1})}
+ \frac{L_{i_k}{a_k}^2}{2{p_{i_k}}^2 {A_k}}\|\vv_k^{i_k} - \vv_{k-1}^{i_k}\|^2.
\end{aligned}
\end{equation}

Let $m_k(\vu) = \sum_{j=1}^k a_j\innp{\Delta_j, \vu - \vx_j} +  \littlesum_{i=1}^n \frac{\sigma_i}{2}\|\vu^i - \vx_1^i\|^2$ denote the function under the minimum in the definition of $\Lambda_k$. Observe that $m_k(\vu) = m_{k-1}(\vu) + a_k \innp{\Delta_k, \vu - \vx_k}$ and $\vv_k = \argmin_{\vu}m_k(\vu)$. Then:
\begin{align*}
m_{k-1}(\vv_k) =& m_{k-1}(\vv_{k-1}) + \innp{\nabla m_{k-1}(\vv_{k-1}), \vv_k - \vv_{k-1}}
+ \littlesum_{i=1}^{n-1} \frac{\sigma_i}{2}\|\vv_k^i - \vv_{k-1}^i\|^2\\
=& m_{k-1}(\vv_{k-1}) +  \frac{\sigma_{i_k}}{2}\|\vv_k^{i_k} - \vv_{k-1}^{i_k}\|^2,
\end{align*}
as $\vv_k$ and $\vv_{k-1}$ only differ over the block $i_k$ and $\vv_{k-1} = \argmin_{\vu}m_{k-1}(\vu)$ (and, thus, $\nabla m_{k-1}(\vv_{k-1}) = \zeros$).

Hence, it follows that $m_k(\vv_k) - m_{k-1}(\vv_{k-1}) =  a_k\innp{\Delta_k, \vv_{k} - \vx_k} +  \frac{\sigma_{i_k}}{2}\|\vv_k^{i_k} - \vv_{k-1}^{i_k}\|^2$, and, thus:
\begin{equation}\label{eq:lb-change-aarbcd}
\begin{aligned}
A_k\Lambda_k - A_{k-1}\Lambda_{k-1} = a_k f(\vx_k) +  a_k\innp{\Delta_k, \vv_{k} - \vx_k} +  \frac{\sigma_{i_k}}{2}\|\vv_k^{i_k} - \vv_{k-1}^{i_k}\|^2.
\end{aligned}
\end{equation}

Combining (\ref{eq:ub-change-aarbcd}) and (\ref{eq:lb-change-aarbcd}):
\begin{align*}
A_k \Gamma_k - A_{k-1}\Gamma_{k-1}
\leq & \innp{\nabla f(\vx_{k}), A_{k-1}(\vx_{k}-\vy_{k-1}) + I_N^{i_k}\frac{a_k}{p_{i_k}}(\vv_k - \vv_{k-1}) } -  a_k\innp{\Delta_k, \vv_{k} - \vx_k}\\
& + \frac{L_{i_k}{a_k}^2}{2{p_{i_k}}^2A_k }\|\vv_k^{i_k} - \vv_{k-1}^{i_k}\|^2 - \frac{\sigma_{i_k}}{2}\|\vv_k^{i_k} - \vv_{k-1}^{i_k}\|^2\\
&\leq \innp{\nabla f(\vx_{k}), A_{k-1}(\vx_{k}-\vy_{k-1}) + I_N^{i_k}\frac{a_k}{p_{i_k}}(\vv_k - \vv_{k-1})} -  a_k\innp{\Delta_k, \vv_{k} - \vx_k},
\end{align*}
as, by the initial assumptions, $\frac{{a_k}^2}{A_k}\leq \frac{p_{i_k}^2 \sigma_{i_k}}{L_{i_k}}$. 
%

Finally, taking expectations on both sides, and as $\vx_k, \vy_{k-1}, \vv_{k-1}$ are all measurable w.r.t. $\mathcal{F}_{k-1}$ and by the separability of the terms in the definition of $\vv_k$:
\begin{align*}
\mathbb{E}[A_k \Gamma_k - A_{k-1}\Gamma_{k-1}|\mathcal{F}_{k-1}]
 \leq \innp{\nabla f(\vx_k), A_k \vx_k - A_{k-1}\vy_{k-1} - a_k \vv_{k-1}}=0,
\end{align*}
as, from (\ref{eq:AAR-BCD}), $\vx_k = \vxh_k = \frac{A_{k-1}}{A_k}\vy_{k-1} + \frac{a_k}{A_k}\vv_{k-1}$ over all the blocks except for the block $n,$ while $\nabla_n f(\vx_k) = \zeros,$ as $\vx_k$ is the minimizer of $f$ over block $n,$ when other blocks in $\vxh_k$ are fixed.\footnote{Previous version of the proof provided an incomplete justification for the last expression in the proof being equal to zero; we thank Sergey Guminov for pointing this out.} 
\end{proof}

\section{Efficient Implementation of \ref{eq:AAR-BCD} Iterations}\label{app:efficient-iterations}

Using similar ideas as in \cite{fercoq2015accelerated,lin2014accelerated,lee2013efficient}, here we discuss how to efficiently implement iterations of \ref{eq:AAR-BCD}, without requiring full-vector updates. 
First, due to the separability of the terms inside the minimum, between successive iterations $\vv_k$ changes only over a single block. This is formalized in the following simple proposition.
\begin{proposition}\label{prop:changes-over-a-block}
In each iteration $k \geq 1,$ $\vv_k^{i} = \vv_{k-1}^{i}$, $\forall i \neq i_k$ and $\vv_k^{i_k} = \vv_{k-1}^{i_k} + \vw^{i_k}$, where:
$$
\vw^{i_k} = \argmin_{\vu^{i_k}}\{a_k \innp{\Delta_k^{i_k}, \vu} + \frac{\sigma_{i_k}}{2}\|\vu^{i_k} - \vv_{k-1}^{i_k}\|^2\}.
$$
\end{proposition}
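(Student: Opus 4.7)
The plan is to exploit the separability of the objective that defines $\vv_k$. Because $\Delta_j = I_N^{i_j}\nabla f(\vx_j)/p_{i_j}$ has support only on block $i_j$, the linear term $\sum_{j=1}^k a_j \innp{\Delta_j, \vu}$ decomposes as $\sum_{i=1}^n \sum_{j \leq k: i_j = i} a_j \innp{\Delta_j^i, \vu^i}$, and the quadratic penalty already decomposes over blocks. So the minimization in the definition of $\vv_k$ splits into $n$ independent per-block problems:
\begin{equation*}
\vv_k^i = \argmin_{\vu^i}\Big\{\littlesum_{j\leq k:\, i_j = i} a_j \innp{\Delta_j^i, \vu^i} + \frac{\sigma_i}{2}\|\vu^i - \vx_1^i\|^2\Big\}.
\end{equation*}

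From this decomposition, the first claim is immediate: for any $i \neq i_k$, the set of indices $\{j \leq k : i_j = i\}$ coincides with $\{j \leq k-1 : i_j = i\}$, so the per-block objective is identical to that at step $k-1$ and hence $\vv_k^i = \vv_{k-1}^i$.

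For the block $i = i_k$, I would write the first-order optimality conditions of the per-block problem at both iteration $k-1$ and iteration $k$, which read
\begin{equation*}
\littlesum_{j \leq k-1:\, i_j = i_k} a_j \Delta_j^{i_k} + \sigma_{i_k}(\vv_{k-1}^{i_k} - \vx_1^{i_k}) = \zeros,\quad \littlesum_{j \leq k:\, i_j = i_k} a_j \Delta_j^{i_k} + \sigma_{i_k}(\vv_k^{i_k} - \vx_1^{i_k}) = \zeros,
\end{equation*}
and then subtract the two equalities. The only difference between the two sums is the new term $a_k \Delta_k^{i_k}$, yielding $\vv_k^{i_k} - \vv_{k-1}^{i_k} = -\tfrac{a_k}{\sigma_{i_k}}\Delta_k^{i_k}$. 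Finally, I would verify that this displacement $\vw^{i_k}$ is precisely the unique minimizer of the strictly convex quadratic in the proposition statement, by checking its first-order optimality condition, which coincides with the one just derived.

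There is no real obstacle here; the only thing to be careful about is bookkeeping, namely confirming that the separable structure is preserved after adding the single new linear term at iteration $k$, and that the proposition's argmin formulation is equivalent (via a trivial change of variables $\vu \mapsto \vv_{k-1}^{i_k} + \vw$) to the optimality-condition characterization derived above.
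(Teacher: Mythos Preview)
Your proposal is correct and matches the paper's approach: both exploit block-separability of the objective defining $\vv_k$ together with the fact that $\Delta_k$ is supported only on block $i_k$. The only cosmetic difference is that the paper shifts the argmin directly (recentering the quadratic from $\vx_1^{i_k}$ to $\vv_{k-1}^{i_k}$ in one step), whereas you arrive at the same update by writing out and subtracting the first-order optimality conditions.
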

\begin{proof}
Recall the definition of $\vv_k$. We have:
\begin{align*}
\vv_k =& \argmin_{\vu}\Big\{\sum_{j=1}^{k} \innp{\Delta_j, \vu} + \sum_{i=1}^{n-1} \frac{\sigma_i}{2}\|\vu^{i} - \vx_1^{i}\|^2\Big\}\\
=& \argmin_{\vu}\Big\{ \sum_{j=1}^{k-1} \innp{\Delta_j, \vu} + \innp{\Delta_k, \vu} + \sum_{i=1}^{n-1} \frac{\sigma_i}{2}\|\vu^{i} - \vx_1^{i}\|^2 \Big\}\\
=& \argmin_{\vu}\Big\{ \sum_{j=1}^{k-1} \innp{\Delta_j, \vu} + \innp{\Delta_k^{i_k}, \vu^{i_k}} + \sum_{i=1}^{n-1} \frac{\sigma_i}{2}\|\vu^{i} - \vx_1^{i}\|^2 \Big\}\\
=& \vv_{k-1} + \argmin_{\vu^{i_k}}\Big\{ \innp{\Delta_k^{i_k}, \vu^{i_k}} + \frac{\sigma_{i_k}}{2}\|\vu^{i_k} - \vv_{k-1}^{i_k}\|^2 \Big\},
\end{align*}
where the third equality is by the definition of $\Delta_k$ ($\Delta_k^i = 0$ for $i \neq i_k$) and the last equality follows from block-separability of the terms under the min.
\end{proof}

Since $\vv_k$ only changes over a single block, this will imply that the changes in $\vx_k$ and $\vy_k$ can be localized. In particular, let us observe the patterns in changes between successive iterations. We have that, $\forall i \neq n:$
\begin{equation}\label{eq:change-in-x-i}
\begin{aligned}
\vx_k^{i} &= \frac{A_{k-1}}{A_k} \vy_{k-1}^i + \frac{a_k}{A_k}\vv_{k-1}^i= \frac{A_{k-1}}{A_k}\left(\vy_{k-1}^i - \vv_{k-1}^i\right) + \vv_{k-1}^i
\end{aligned}
\end{equation}
and
\begin{equation}\label{eq:change-in-y-i}
\begin{aligned}
\vy_k^i &= \vx_k^i + \frac{1}{p_i}\frac{a_k}{A_k}\left(\vv_k^i - \vv_{k-1}^i\right)\\
&= \frac{A_{k-1}}{A_k}\left(\vy_{k-1}^i - \vv_{k-1}^i\right) + \left(1 - \frac{1}{p_i}\frac{a_k}{A_k}\right)\left(\vv_{k-1}^i - \vv_k^i\right) + \vv_k^i.
\end{aligned}
\end{equation}
Due to Proposition~\ref{prop:changes-over-a-block}, $\vv_k$ and $\vv_{k-1}$ can be computed without full-vector operations (assuming the gradients can be computed without full-vector operations, which we will show later in this section). Hence, we need to show that it is possible to replace $\frac{A_{k-1}}{A_k}\left(\vy_{k-1}^i - \vv_{k-1}^i\right)$ with a quantity that can be computed without the full-vector operations. Observe that $\vy_0 - \vv_0 = 0$ (from the initialization of~\eqref{eq:AAR-BCD}) and that, from~\eqref{eq:change-in-y-i}:
\begin{equation}
\vy_k^i - \vv_k^i = \frac{A_{k-1}}{A_k}\left(\vy_{k-1}^i - \vv_{k-1}^i\right) + \left(1 - \frac{1}{p_i}\frac{a_k}{A_k}\right)\left(\vv_{k-1}^i - \vv_k^i\right).\notag
\end{equation}
Dividing both sides by $\frac{{a_k}^2}{{A_{k}}^2}$ and assuming that $\frac{{a_k}^2}{A_k}$ is constant over iterations, we get:
\begin{equation}\label{eq:recursion-y-v}
\frac{{A_k}^2}{{a_k}^2}\left(\vy_k^i - \vv_k^i\right) = \frac{{A_{k-1}}^2}{{a_{k-1}}^2}\left(\vy_{k-1}^i - \vv_{k-1}^i\right) + \frac{{A_k}^2}{{a_k}^2}\left(1 - \frac{1}{p_i}\frac{a_k}{A_k}\right)\left(\vv_{k-1}^i - \vv_k^i\right).
\end{equation}
Let $N_n$ denote the size of the $n^{\mathrm{th}}$ block and define the $(N-N_n)$-length vector $\vu_k$ by $\vu_k^i = \frac{{A_k}^2}{{a_k}^2}\left(\vy_k^i - \vv_k^i\right)$, $\forall i \neq n$. Then (from~\eqref{eq:recursion-y-v}) $\vu_k^i = \vu_{k-1}^{i}  + \frac{{A_k}^2}{{a_k}^2}\left(1 - \frac{1}{p_i}\frac{a_k}{A_k}\right)\left(\vv_{k-1}^i - \vv_k^i\right),$ and, hence, in iteration $k$, $\vu_k$ changes only over block $i_k$. Combining with~\eqref{eq:change-in-x-i} and~\eqref{eq:change-in-y-i}, we have the following lemma.
\begin{lemma}\label{lemma:efficient-comp-of-y-x}
Assume that $\frac{{a_k}^2}{A_{k}}$ is kept constant over the iterations of~\ref{eq:AAR-BCD}. Let $\vu_k$ be the $(N-N_n)$-dimensional vector defined recursively as $\vu_0 = \zeros$, $\vu_k^{i} = \vu_{k-1}^i$ for $i \in \{1, ..., n-1\}$, $i \neq i_k$ and $\vu_k^{i_k} = \vu_{k-1}^{i_k} + \frac{{A_k}^2}{{a_k}^2}\left(1 - \frac{1}{p_i}\frac{a_k}{A_k}\right)\left(\vv_{k-1}^i - \vv_k^i\right)$. Then, $\forall i \in \{1,..., n-1\}$: $\vx_k^i = \frac{{a_k}^2}{{A_k}^2}\vu_{k-1}^i + \vv_{k-1}^i$ and $\vy_k^i = \frac{{a_k}^2}{{A_k}^2}\vu_{k-1}^i + \left(1 - \frac{1}{p_i}\frac{a_k}{A_k}\right)\left(\vv_{k-1}^i - \vv_k^i\right) + \vv_k^i$.
\end{lemma}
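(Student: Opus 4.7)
The plan is to treat the lemma as a purely algebraic consequence of the identification $\vu_{k-1}^i = \frac{A_{k-1}^2}{a_{k-1}^2}(\vy_{k-1}^i - \vv_{k-1}^i)$ combined with the already-derived equations~\eqref{eq:change-in-x-i} and~\eqref{eq:change-in-y-i}. I would first establish this identification by induction on $k$, taking as base case the convention $\vv_0 := \vx_1$ (so that $\vy_0 - \vv_0 = \zeros$ is consistent with $\vu_0 = \zeros$) and noting that the seeding rule $\vy_1 = \vx_1 + I_N^{i_1}\tfrac{1}{p_{i_1}}(\vv_1 - \vx_1)$ of~\ref{eq:AAR-BCD} is exactly~\eqref{eq:change-in-y-i} at $k=1$ with $\vy_0 = \vv_0 = \vx_1$ and $a_1 = A_1$. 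The inductive step is then precisely equation~\eqref{eq:recursion-y-v}: multiplying~\eqref{eq:change-in-y-i} through by $\frac{A_k^2}{a_k^2}$ and invoking the hypothesis converts the first summand into $\vu_{k-1}^i$, and the remaining summand is the defining increment for $\vu_k$.

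Given the identification, both closed forms follow by substitution. From~\eqref{eq:change-in-x-i}, I write
\[
\vx_k^i \;=\; \tfrac{A_{k-1}}{A_k}\cdot\tfrac{a_{k-1}^2}{A_{k-1}^2}\vu_{k-1}^i + \vv_{k-1}^i \;=\; \tfrac{a_{k-1}^2}{A_{k-1}A_k}\vu_{k-1}^i + \vv_{k-1}^i,
\]
and the key algebraic identity $\frac{a_{k-1}^2}{A_{k-1}A_k} = \frac{a_k^2}{A_k^2}$, which is just the standing assumption $\frac{a_{k-1}^2}{A_{k-1}} = \frac{a_k^2}{A_k}$ divided by $A_k$, converts this to the stated $\vx_k^i = \frac{a_k^2}{A_k^2}\vu_{k-1}^i + \vv_{k-1}^i$. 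Applying the same rewriting to the first term of~\eqref{eq:change-in-y-i} and leaving the other two summands untouched yields the claimed expression for $\vy_k^i$.

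The only part that requires any care is the bookkeeping at $k=1$: since $\vv_0$ is not produced by~\ref{eq:AAR-BCD}, one must declare $\vv_0 := \vx_1$ and verify explicitly that the proposed formulas reproduce both the initial point $\vx_1$ and the seeded $\vy_1$ (which perturbs $\vx_1$ only over block $i_1$). Once that is done, the remainder of the proof is a one-line algebraic unwinding powered by the constancy of $\frac{a_k^2}{A_k}$, which is the only analytic hypothesis being used; there is no genuine obstacle beyond keeping the indices straight.
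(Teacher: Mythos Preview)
Your proposal is correct and follows essentially the same route as the paper: the paper also derives the identification $\vu_k^i = \tfrac{A_k^2}{a_k^2}(\vy_k^i - \vv_k^i)$ from~\eqref{eq:recursion-y-v} (which is exactly your inductive step) and then substitutes back into~\eqref{eq:change-in-x-i} and~\eqref{eq:change-in-y-i} using the constancy of $\tfrac{a_k^2}{A_k}$. Your treatment of the base case is slightly more careful than the paper's, which simply asserts $\vy_0 - \vv_0 = \zeros$ ``from the initialization'' without spelling out the convention $\vv_0 := \vx_1$ or checking that the seeding rule for $\vy_1$ is the $k=1$ instance of~\eqref{eq:change-in-y-i}.
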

Note that we will never need to explicitly compute $\vx_k, \vy_k$, except for the last iteration $K$, which outputs $\vy_K$. To formalize this claim, we need to show that we can compute the gradients $\nabla_i f(\vx_k)$ without explicitly computing $\vx_k$ and that we can efficiently perform the exact minimization over the $n^{\mathrm{th}}$ block. This will only be possible by assuming specific structure of the objective function, as is typical for accelerated block-coordinate descent methods~\cite{fercoq2015accelerated,lee2013efficient,lin2014accelerated}. In particular, we assume that for some $m \times N$ dimensional matrix $\mM:$
\begin{equation}\label{eq:objective-structure}
f(\vx) = \sum_{j=1}^m \phi_j(e_j^T\mM\vx) + \psi(\vx),
\end{equation}
where $\phi_j: \mathbb{R}\rightarrow \mathbb{R}$ and $\psi = \sum_{i=1}^n \psi_i:\mathbb{R}^N\rightarrow \mathbb{R}$ is block-separable. 
\paragraph{Efficient Gradient Computations.} Assume for now that $\vx_k^n$ can be computed efficiently (we will address this at the end of this section). Let ${ind}$ denote the set of indices of the coordinates from blocks $\{1, 2, ..., n-1\}$ and denote by $\mB$ the matrix obtained by selecting the columns of $\mM$ that are indexed by ${ind}$. Similarly, let $ind_n$ denote the set of indices of the coordinates from block $n$ and let $\mC$ denote the submatrix of $\mM$ obtained by selecting the columns of $\mM$ that are indexed by $ind_n$.  Denote $\vr_{\vu_k} = \mB \vu_k$, $\vr_{\vv_k} = \mB [\vv_k^{1}, \vv_k^2, ... , \vv_k^{n-1}]^T$, $\vr_n = \mC \vx_k^n$. Let $ind_{i_k}$ be the set of indices corresponding to the coordinates from block $i_k$. Then:
\begin{equation}\label{eq:grad-computation}
\nabla_{i_k} f(\vx_k) = \sum_{j=1}^m (\mM_{j, ind_{ik}})^T \phi_j'\left(\frac{{a_k}^2}{{A_k}^2}\vr_{\vu_{k-1}}^j + \vr_{\vv_{k-1}}^j + \vr_n^j\right) + \nabla_{i_k}\psi(\vx).
\end{equation}
Hence, as long as we maintain $\vr_{\vu_k}, \vr_{\vv_k},$ and $\vr_n$ (which do not require full-vector operations), we can efficiently compute the partial gradients $\nabla_{i_k}f(\vx_k)$ without ever needing to perform any full-vector operations.
\paragraph{Efficient Exact Minimization.} Suppose first that $\psi(\vx)\equiv 0$. Then:
$$
\vr_n = \argmin_{\vr \in \mathbb{R}^{m}}\left\{\sum_{j=1}^m \phi_j\left(\frac{{a_k}^2}{{A_k}^2}\vr_{\vu_{k-1}}^j + \vr_{\vv_{k-1}}^j + \vr^j\right)\right\},
$$
and $\vr_n$ can be computed but solving $m$ single-variable minimization problems, which can be done in closed form or with a very low complexity. Computing $\vr_n$ is sufficient for defining all algorithm iterations, except for the last one (that outputs a solution). Hence, we only need to compute $\vx_k^n$ once -- in the last iteration. 

More generally, $\vx_k^n$ is determined by solving:
$$
\vx_k^n = \argmin_{\vx \in \mathbb{R}^{N_n}}\left\{\sum_{j=1}^m \phi_j\left(\frac{{a_k}^2}{{A_k}^2}\vr_{\vu_{k-1}}^j + \vr_{\vv_{k-1}}^j + (\mC\vx)^j\right) + \psi_n(\vx)\right\}.
$$
When $m$ and $N_n$ are small, high-accuracy polynomial-time convex optimization algorithms are computationally inexpensive, and $\vx_k^n$ can be computed efficiently. 

In the special case of linear and ridge regression, $\vx_k^n$ can be computed in closed form, with minor preprocessing. In particular, if $\vb$ is the vector of labels, then the problem becomes:
$$
\vx_k^n = \argmin_{\vx \in \mathbb{R}^{N_n}}\left\{\sum_{j=1}^m \left(\frac{{a_k}^2}{{A_k}^2}\vr_{\vu_{k-1}}^j + \vr_{\vv_{k-1}}^j + (\mC\vx)^j - \vb^j\right)^2 + \frac{\lambda}{2}\|\vx\|_2^2\right\},
$$
where $\lambda = 0$ in the case of (simple) linear regression. Let $\vb' = \vb - \frac{{a_k}^2}{{A_k}^2}\vr_{\vu_{k-1}} - \vr_{\vv_{k-1}}$. Then:
$$
\vx_k^n = (\mC^T\mC + \lambda \mathbf{I})^{\dagger} (\mC^T \vb'),
$$
where $(\cdot)^{\dagger}$ denotes the matrix pseudoinverse, and $\mathbf{I}$ is the identity matrix. Since $\mC^T\mC + \lambda \mathbf{I}$ does not change over iterations, $(\mC^T\mC + \lambda \mathbf{I})^{\dagger}$ can be computed only once at the initialization. Recall that $\mC^T\mC + \lambda \mathbf{I}$ is an $N_n \times N_n$ matrix, where $N_n$ is the size of the $n^{\mathrm{th}}$ block, and thus inverting $\mC^T\mC + \lambda \mathbf{I}$ is computationally inexpensive as long as $N_n$ is not too large. This reduces the overall per-iteration cost of the exact minimization to about the same cost as for performing gradient steps.

\end{document}